%%%%%%%%%%%%%%%%%%%%%%%%%%%%%%%%%%%%%%%%%%%%%%%%%%%%%%%%%%%%%%%%%%%%%%%%%%
%%%%%%%%%%%%%%%%%%%%%%%%%%%%%%%%%%%%%%%%%%%%%%%%%%%%%%%%%%%%%%%%%%%%%%%%%%%%
\documentclass[11pt,twoside]{article}
\usepackage{mathrsfs}
\usepackage{amsmath}
\usepackage{amsthm}
\usepackage{amsfonts}
\usepackage{amssymb}
\usepackage{cite}
\usepackage{color}
\usepackage{latexsym}
\usepackage[all]{xy}

\date{\empty}
\pagestyle{plain}
%%%%%%%%%%%%%%%%%%%%%%%%%%%%%%%%%%%%%%%%%%%%%%%%%%%%%%%%%%%%%%%%%%%%
\textheight= 21.6 true cm \textwidth =15 true cm
\allowdisplaybreaks[4] %\footskip=15pt

\topmargin=27pt \evensidemargin0pt \oddsidemargin0pt
%\headheight7pt
%\headsep12pt
%\marginparwidth0pt
%\marginparsep0pt
%\footskip0pt
%\footnotesep0pt
%%%%%%%%%%%%%%%%%%%%%%%%%%%%%%%%%%%%%%%%%%%%%%%%%%%%%%%%%%%%%%%%%%%
\numberwithin{equation}{section} \theoremstyle{plain}
\newtheorem*{thm*}{Main Theorem}
\newtheorem{theorem}{Theorem}[section]
\newtheorem{corollary}[theorem]{Corollary}
\newtheorem*{corollary*}{Corollary}

\newtheorem*{claim*}{Claim}
\newtheorem{lemma}[theorem]{Lemma}
\newtheorem*{lemma*}{Lemma}
\newtheorem{proposition}[theorem]{Proposition}
\newtheorem*{proposition*}{Proposition}

\newtheorem*{remark*}{Remark}
\newtheorem{example}[theorem]{Example}
\newtheorem*{example*}{Example}

\newtheorem*{question*}{Question}
\newtheorem{definition}[theorem]{Definition}
\newtheorem*{definition*}{Definition}

%%%%%%%%%%%%%%%%%%%%%%%%%%%%%%%%%%%%%%
%%%%%%%%%%%%%%%%%%%%%%%%%%%%%%%%%%%%%%
\newtheorem*{acknowledgements*}{ACKNOWLEDGEMENTS}

% % % % % % % % % % % % % % % % % % % % % % % %
% Definitions made by J. Benitez

%%%%%%%%%%%%%%%%%%%%%%%%%%%%%%%%%%%%%%%%%%%%%%%

\begin{document}
\begin{center}
{\large \bf  $m$-weak group inverses in a ring with involution }

\vspace{0.4cm} {\small \bf Yukun Zhou}
\footnote{ Yukun Zhou ( E-mail:2516856280@qq.com): School of Mathematics, Southeast University, Nanjing 210096, China.}
\vspace{0.4cm} {\small \bf, Jianlong Chen}
\footnote{ Jianlong Chen (Corresponding author E-mail: jlchen@seu.edu.cn): School of Mathematics, Southeast University, Nanjing 210096, China.}
\vspace{0.4cm} {\small \bf, Mengmeng Zhou}
\footnote{Mengmeng Zhou (E-mail: mmz9202@163.com): School of Mathematics, Southeast University, Nanjing 210096, China.}

\end{center}
\bigskip

{ \bf  Abstract:}  \leftskip0truemm\rightskip0truemm
 In a unitary ring with involution, we prove that each element has at most one weak group inverse if and only if each idempotent element has a unique weak group inverse. Furthermore, we define the $m$-weak group inverse and show some properties of $m$-weak group inverse.

{ \textbf{Key words:}} weak group inverse, $m$-weak group inverse, pseudo core inverse.

{ \textbf{AMS subject classifications:}} 15A09, 16W10.
 \bigskip
%16E50 von Neumann regular rings and generalizations
%%%%%%%%%%%%%%%%%%%%%%%%%%%%%%%%%%%%%%%%%%%%%%%%%%%%%%%%%%%%%%%%%%%%%%%%%%%%%%%%%%%%%%%%%%%%%%%%%%%%%%%%%%%%%%%%%%%%%%%%%%%%%%%%%%%%%%%%%%%%%%%%%%%%%%%%%%%%%%%%%%%%%%%%%%%%%%%%%%%%
%%%%%%%%%%%%%%%%%%%%%%%%%%%%%%%%%%%%%%%%%%%%%%%%%%%%%%%%%%%%%%%%%%%%%%%%%%%%%%%%%%%%%%%%%%%%%%%%%%%%%%%%%%%%%%%%%%%%%%%%%%%%%%%%%%%%%%%%%%%%%%%%%%%%%%%%%%%%%%%%%%%%%%%%%%%%%%%%%%%%
%%%%%%%%%%%%%%%%%%%%%%%%%%%%%%%%%%%%%%%%%%%%%%%%%%%%%%%%%%%%%%%%%%%%%%%%%%%%%%%%%%%%%%%%%%%%%%%%%%%%%%%%%%%%%%%%%%%%%%%%%%%%%%%%%%%%%%%%%%%%%%%%%%%%%%%%%%%%%%%%%%%%%%%%%%%%%%%%%%%%
%%%%%%%%%%%%%%%%%%%%%%%%%%%%%%%%%%%%%%%%%%%%%%%%%%%%%%%%%%%%%%%%%%%%%%%%%%%%%%%%%%%%%%%%%%%%%%%%%%%%%%%%%%%%%%%%%%%%%%%%%%%%%%%%%%%%%%%%%%%%%%%%%%%%%%%%%%%%%%%%%%%%%%%%%%%%%%%%%%%

\section {\bf Introduction}
In 1958, Drazin \cite{D} introduced the definition of the pseudo inverse in rings and semigroups, which is called the Drazin inverse later. It's well known that an element is Drazin invertible if and only if this element is strongly $\pi$-regular. In 2014, Manjunatha Prasad et al. \cite{MPM} introduced the core-EP inverse of complex matrices. In 2017, Gao et al. \cite{GC} generalized the core-EP inverse to rings with involution, where an involution $a\mapsto a^{*}$ is an anti-isomorphism such that
$(a+b)^{*}=a^{*}+b^{*}, (ab)^{*}=b^{*}a^{*}, (a^{*})^{*}=a$ for any $a, b\in R.$ In the rest of this paper, we restrict $R$ is a unitary ring with involution.

In 2018, Wang et al. \cite{WC} defined weak group inverses of complex matrices. Then, Zhou et al. \cite{Z} generalized the weak group inverses to proper $\ast$-rings and proved that each element in a proper $\ast$-ring has at most one weak group inverse. They gave an example to show that the weak group inverse of an element may not be unique in $R$. In addition, they proved that an element in a proper $\ast$-ring is weak group invertible if and only if this element has group-EP decomposition.

However, the condition that $R$ is a proper $\ast$-ring is a sufficient but not necessary condition under which each element has at most one weak group inverse (see Example \ref{e1}). This motivates us to study sufficient and necessary conditions under which each element has at most one weak group inverse. In addition, it's natural to consider the conditions under which an element is weak group invertible.

Motivated by the idea of  \cite{MRT} and \cite{ZCP}, we define the $m$-weak group inverse in $R$. Then we give an equivalent definition of the $m$-weak group inverse. In addition, we consider the relation between the $m$-weak group inverse and the $(m+1)$-weak group inverse. Some equivalent characterizations are given when the $m$-weak group inverse is equal to the Drazin inverse.

The rest of this paper is organized as follows. In Section 2, we present some necessary definitions and lemmas. In Section 3, we prove that each element in $R$ has at most one weak group inverse if and only if each idempotent element has a unique weak group inverse. In addition, some equivalent characterizations of the weak group inverse are given. In Section 4, we define the $m$-weak group inverse in a unitary ring with involution and present some properties of $m$-weak group inverses.
%%%%%%%%%%%%%%%%%%%%%%%%%%%%%%%%%%%%%%%%%%%%%%%%%%%%%%%%%%%%%%%%%%%%%%%%%%%%%%%%%%%%%%%%%%%%%%%%%%%%%%%%%%%%%%%%%%%%%%%%%%%%%%%%%%%%%%%%%%%%%%%%%%%%%%%%%%%%%%%%%%%%%%%%%%%%%%%%%%%%
%%%%%%%%%%%%%%%%%%%%%%%%%%%%%%%%%%%%%%%%%%%%%%%%%%%%%%%%%%%%%%%%%%%%%%%%%%%%%%%%%%%%%%%%%%%%%%%%%%%%%%%%%%%%%%%%%%%%%%%%%%%%%%%%%%%%%%%%%%%%%%%%%%%%%%%%%%%%%%%%%%%%%%%%%%%%%%%%%%%%
%%%%%%%%%%%%%%%%%%%%%%%%%%%%%%%%%%%%%%%%%%%%%%%%%%%%%%%%%%%%%%%%%%%%%%%%%%%%%%%%%%%%%%%%%%%%%%%%%%%%%%%%%%%%%%%%%%%%%%%%%%%%%%%%%%%%%%%%%%%%%%%%%%%%%%%%%%%%%%%%%%%%%%%%%%%%%%%%%%%%
%%%%%%%%%%%%%%%%%%%%%%%%%%%%%%%%%%%%%%%%%%%%%%%%%%%%%%%%%%%%%%%%%%%%%%%%%%%%%%%%%%%%%%%%%%%%%%%%%%%%%%%%%%%%%%%%%%%%%%%%%%%%%%%%%%%%%%%%%%%%%%%%%%%%%%%%%%%%%%%%%%%%%%%%%%%%%%%%%%%

\section{\bf Preliminaries}
Recall that an element $a\in R$ is said to be idempotent if $a^{2}=a$. The set of all idempotent elements is denoted by ${\rm idem} (R)$. We will write $aR=\{ax: x\in R\}$ and $Ra=\{xa: x\in R\}$.

\begin{definition}\label{g1} \emph{\cite{KDC}} An element $a\in R$ is left $\ast$-cancellable if $a^{*}ax=a^{*}ay$ implies $ax=ay$, it is right $\ast$-cancellable if $xaa^{*}=yaa^{*}$ implies $xa=ya$, and $\ast$-cancellable if it is both left and right $*$-cancellable. Moreover, $R$ is a proper $\ast$-ring if and only if every element in $R$ is $\ast$-cancellable.
\end{definition}

\begin{definition} \emph{\cite{D}}
Let $a\in R$. If there exist $x\in R$ and $k\in\mathbb{N}^{+}$ such that
$$xa^{k+1}=a^{k},~~~xax=x, ~~~xa=ax,$$
then $x$ is called the Drazin inverse of $a$. It is unique and denoted by $a^{D}$ when the Drazin inverse exists. If $k$ is the smallest positive integer such that above equations hold, then $k$ is the Drazin index of $a$ and denoted by ${\rm i}(a)=k.$
\end{definition}

\begin{definition} \emph{\cite{GC}}
Let $a\in R$. If there exist $x\in R$ and $k\in\mathbb{N}^{+}$ such that
$$xa^{k+1}=a^{k},~~~ax^{2}=x, ~~~(ax)^{*}=ax,$$
then $x$ is called the pseudo core inverse of $a$. It is unique and denoted by $a^{\scriptsize\textcircled{\tiny D}}$ when the pseudo core inverse exists.

\end{definition}

\begin{definition} \emph{\cite{Z}} Let $a\in R$. If there exist $x\in R$ and $k\in\mathbb{N}^{+}$ such that
$$xa^{k+1}=a^{k},~~~ax^{2}=x, ~~~(a^k)^{*}a^{2}x=(a^k)^{*}a,$$
then $x$ is called the weak group inverse of $a$. When the weak group inverse of $a$ is unique, it's denoted by $a^{\scriptsize\textcircled{\tiny W}}$.
\end{definition}

The smallest positive integer $k$ satisfying above equations is called the weak group index of $a$ for $x$ and denoted by ${\rm ind}(a,x)=k$. By the following Lemma \ref{a2}, we can get that $a$ is Drazin invertible if $a$ is weak group invertible. By a similar proof of Proposition 3.4 in \cite{Z}, we have ${\rm ind}(a,x)={\rm i}(a)$. So, if $y$ is another weak group inverse of $a$, we have ${\rm ind}(a,x)={\rm i}(a)={\rm ind}(a,y).$ For this reason, the smallest positive integer $k$ satisfying above equations can be called the weak group index of $a$ and denoted by ${\rm ind}(a)=k$.

\begin{lemma}\label{a2} \emph{\cite{GC}} Let $a\in R$. If there exist $x\in R$ and $k\in \mathbb{N}^{+}$such that
$$ ~xa^{k+1}=a^{k}, ~~~ ax^{2}=x,$$
then we have
\begin{itemize}
\item [{\rm (1)}] $ax=a^{m}x^{m}$ for arbitrary positive integer $m$;
\item [{\rm (2)}] $xax=x$;
\item [{\rm (3)}] $a$ is Drazin invertible, $a^{D}=x^{k+1}a^{k}$, and ${\rm i}(a)\leq k$.
\end{itemize}
\end{lemma}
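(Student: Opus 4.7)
\medskip

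\noindent\emph{Proof proposal.} My plan is to establish (1) first, use it to derive (2), and then combine both to check that $y := x^{k+1}a^{k}$ satisfies the three defining identities of the Drazin inverse.

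For (1), I will argue by descent on the exponent, treating $ax^{2}=x$ as a rewrite rule:
\[ a^{m+1}x^{m+1} \;=\; a^{m}(ax^{2})x^{m-1} \;=\; a^{m}\cdot x\cdot x^{m-1} \;=\; a^{m}x^{m}. \]
Iterating this step $m-1$ times pulls the exponent down to $1$, which gives $a^{m}x^{m}=ax$ for every $m\ge 1$. For (2), I will then multiply $xa^{k+1}=a^{k}$ on the right by $x^{k+1}$. The left-hand side becomes $x(a^{k+1}x^{k+1})=x(ax)=xax$ by part (1), while the right-hand side collapses via the same rewrite rule, $a^{k}x^{k+1}=a^{k-1}(ax^{2})x^{k-1}=a^{k-1}x^{k}=\cdots=ax^{2}=x$, so $xax=x$.

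For (3), I will set $y:=x^{k+1}a^{k}$ and verify the three Drazin-inverse conditions in turn. For commutation, $ya=x^{k+1}a^{k+1}=x^{k}(xa^{k+1})=x^{k}a^{k}$ and $ay=(ax^{2})x^{k-1}a^{k}=x\cdot x^{k-1}a^{k}=x^{k}a^{k}$, so $ay=ya$. For the power identity, repeated application of $xa^{k+1}=a^{k}$ pulls $a$'s across the string of $x$'s,
\[ ya^{k+1} \;=\; x^{k+1}a^{2k+1} \;=\; x^{k}(xa^{k+1})a^{k} \;=\; x^{k}a^{2k} \;=\;\cdots\;=\; xa^{k+1} \;=\; a^{k}. \]
For absorption, $yay=(ya)y=x^{k}a^{k}\cdot x^{k+1}a^{k}=x^{k}(a^{k}x^{k+1})a^{k}=x^{k}\cdot x\cdot a^{k}=y$, the inner bracket being evaluated by the same reduction used in (2). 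These three identities together yield $a^{D}=x^{k+1}a^{k}$ and ${\rm i}(a)\le k$.

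The whole argument is iterated use of the two rewrite rules $ax^{2}=x$ and $xa^{k+1}=a^{k}$; no deeper device is required, and the only place demanding care is keeping track of the exponents through these reductions. That bookkeeping is the sole point at which a slip could occur, rather than any conceptual barrier.
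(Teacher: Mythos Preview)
Your argument is correct in every part: the descent $a^{m+1}x^{m+1}=a^{m}(ax^{2})x^{m-1}=a^{m}x^{m}$ gives (1); right-multiplying $xa^{k+1}=a^{k}$ by $x^{k+1}$ and reducing $a^{k}x^{k+1}$ to $x$ gives (2); and the three Drazin identities for $y=x^{k+1}a^{k}$ follow exactly as you wrote, the key reductions being $a^{k}x^{k+1}=x$ and the telescoping $x^{k+1}a^{2k+1}=x^{k}a^{2k}=\cdots=xa^{k+1}=a^{k}$.

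There is nothing to compare against in this paper: Lemma~\ref{a2} is quoted from \cite{GC} and is stated here without proof. Your approach is the standard one and is essentially what appears in the original source, so no alternative route is at issue.
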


\begin{corollary}\label{g2}Let $a\in R$. If there exist $x\in R$ and $k\in \mathbb{N}^{+}$such that
$$ ~xa^{k+1}=a^{k}, ~~~ ax^{2}=x,$$
then we have
\begin{itemize}
\item [{\rm (1)}] $aa^{D}x=x$ ;\item [{\rm (2)}] $(ax)^{2}=ax$;\item [{\rm (3)}] $axa^{D}=a^{D}$ .
\end{itemize}
\end{corollary}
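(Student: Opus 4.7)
The plan is to deduce all three identities from Lemma~\ref{a2} by short direct manipulations of the two given equations $xa^{k+1}=a^{k}$ and $ax^{2}=x$. Before starting, I would record two bookkeeping identities that follow from $ax^{2}=x$ alone: iterating gives $ax^{j+1}=x^{j}$ for every $j\geq 1$ (since $ax^{j+1}=(ax^{2})x^{j-1}=x\cdot x^{j-1}=x^{j}$), and by induction $x=a^{j}x^{j+1}$ for every $j\geq 0$; in particular $x=a^{k}x^{k+1}$.

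For assertion~(2) I would simply write $(ax)^{2}=a(xax)=ax$, invoking $xax=x$ from Lemma~\ref{a2}(2). For assertion~(3) I would substitute the formula $a^{D}=x^{k+1}a^{k}$ from Lemma~\ref{a2}(3) and compute
$$axa^{D} \;=\; ax\cdot x^{k+1}a^{k} \;=\; ax^{k+2}\,a^{k} \;=\; x^{k+1}a^{k} \;=\; a^{D},$$
where the third equality uses $ax^{k+2}=x^{k+1}$ from the bookkeeping identity.

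For assertion~(1) the trick is to combine the factorization $x=a^{k}x^{k+1}$ with the standard Drazin identity $aa^{D}\cdot a^{k}=a^{k}$. The latter is immediate from $a^{D}a=aa^{D}$ and $a^{D}a^{k+1}=a^{k}$, both of which hold for the explicit $a^{D}=x^{k+1}a^{k}$ supplied by Lemma~\ref{a2}(3). Then
$$aa^{D}x \;=\; aa^{D}\bigl(a^{k}x^{k+1}\bigr) \;=\; \bigl(aa^{D}a^{k}\bigr)x^{k+1} \;=\; a^{k}x^{k+1} \;=\; x,$$
as required.

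I do not anticipate any real obstacle; each part reduces to a one- or two-line computation once the identity $ax^{j+1}=x^{j}$ has been noted. The one mild wrinkle is recognising that~(1) is cleanest when one represents $x$ itself as $a^{k}x^{k+1}$ and applies the Drazin ``projection'' property $aa^{D}a^{k}=a^{k}$, rather than attempting to simplify $aa^{D}x=x^{k}a^{k}x$ directly, which leads to circular rewrites.
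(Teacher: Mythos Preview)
Your proof is correct. The paper itself does not spell out a proof of Corollary~\ref{g2}; it simply states the three identities as immediate consequences of Lemma~\ref{a2}. Your argument is exactly the natural way to fill in those details---using $xax=x$, $ax=a^{m}x^{m}$, and $a^{D}=x^{k+1}a^{k}$ from Lemma~\ref{a2} together with the standard Drazin relation $aa^{D}a^{k}=a^{k}$---so there is no real difference in approach to report.
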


\begin{lemma}\label{m1}Let $a,x\in R$, if there exist three positive integers $m,n,k$ such that
$$ ~xa^{k+1}=a^{k}, ~~~ ax^{2}=x,~~~a^{m}x^{n}=x^{n}a^{m}, $$
then $a$ is Drazin invertible and $a^{D}=x$.
\end{lemma}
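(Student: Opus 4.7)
By Lemma~\ref{a2} the hypotheses $xa^{k+1}=a^k$ and $ax^2=x$ already guarantee that $a$ is Drazin invertible with $a^D=x^{k+1}a^k$, so everything reduces to showing $x=x^{k+1}a^k$.

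The crucial preparatory step is to promote the single commutation $a^mx^n=x^na^m$ into a commutation of $a^m$ with \emph{every} power of $x$. From $ax^2=x$ one checks by induction that $ax^j=x^{j-1}$ for all $j\ge 2$. Computing $a^{m+1}x^n$ in two different ways then gives
\[
a^mx^{n-1}=a^m(ax^n)=a^{m+1}x^n=a(a^mx^n)=a(x^na^m)=(ax^n)a^m=x^{n-1}a^m,
\]
and iterating this reduction down to exponent $1$ produces $a^mx=xa^m$. A one-line induction (multiply on the right by $x$) then extends the commutation to $a^mx^j=x^ja^m$ for every $j\ge 1$.

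Specializing $j=m+1$ and invoking the identity $a^jx^{j+1}=x$ (a further iteration of $ax^2=x$), I obtain
\[
x=a^mx^{m+1}=x^{m+1}a^m.
\]
Feeding this equation into itself by induction on $L$ (using $x^{Lm+1}a^{Lm}=x^{Lm}\cdot x\cdot a^{Lm}=x^{Lm}(x^{m+1}a^m)a^{Lm}=x^{(L+1)m+1}a^{(L+1)m}$) yields $x=x^{Lm+1}a^{Lm}$ for every $L\ge 0$.

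Now pick $L$ with $Lm\ge k$. Iterating $xa^{k+1}=a^k$ gives $xa^j=a^{j-1}$ for $j\ge k+1$, whence $x^{s+1}a^{s+1}=x^s(xa^{s+1})=x^sa^s$ for $s\ge k$, and therefore $x^sa^s=x^ka^k$ for every $s\ge k$. Consequently
\[
x=x^{Lm+1}a^{Lm}=x\cdot(x^{Lm}a^{Lm})=x\cdot x^ka^k=x^{k+1}a^k=a^D,
\]
as desired.

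The main obstacle, to my mind, is the first step: upgrading the commutation at the single pair $(m,n)$ to one holding for every $j\ge 1$ in place of $n$. Once this is established, the remainder is a mechanical absorption of $m$ into a multiple exceeding $k$, closed out by the identification of $x^{k+1}a^k$ with $a^D$ furnished by Lemma~\ref{a2}.
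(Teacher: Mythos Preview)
Your argument is correct. The paper's proof reaches the same conclusion by a different route: it establishes $xa=ax$ in one long chain,
\[
xa=a^{(k+1)mn-1}x^{(k+1)mn}a=\cdots=a^{(k+1)mn}x^{(k+1)mn}=ax,
\]
using only the commutation of \emph{blocks} $a^{pm}$ and $x^{qn}$ (which follows immediately from $a^mx^n=x^na^m$) together with the absorption rules $ax^{j+1}=x^j$ and $xa^{j+1}=a^j$ at strategic points; from $ax=xa$ the identification $x=a^D$ is then immediate. Your approach instead first \emph{strengthens} the commutation hypothesis to $a^mx=xa^m$ by descending on the $x$-exponent, and then finishes with the inductive identity $x=x^{Lm+1}a^{Lm}$ and a final collapse to $x^{k+1}a^k$. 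The ingredients are the same; the organization differs. Your version is more modular and makes the role of each hypothesis transparent (in particular, isolating $a^mx=xa^m$ is a nice intermediate lemma), while the paper's one-shot calculation is shorter but less illuminating as to \emph{why} the specific exponent $(k+1)mn$ is chosen. Either way, both arguments avoid ever needing the full $ax=xa$ until the very end (the paper proves it, you bypass it), so neither is strictly more elementary than the other.
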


\begin{proof}By Lemma \ref{a2}, $a$ is Drazin invertible. Then we have
\begin{eqnarray*}
% \nonumber to remove numbering (before each equation)
 xa&=&a^{(k+1)mn-1}x^{(k+1)mn}a=a^{m-1}a^{((k+1)n-1)m}x^{(k+1)mn}a\\
 &=&a^{m-1}x^{(k+1)mn}a^{((k+1)n-1)m}a=a^{m-1}x^{(k+1)mn-1}a^{((k+1)n-1)m}\\
 &=&a^{m}x^{(k+1)mn}a^{((k+1)n-1)m}=a^{m}a^{((k+1)n-1)m}x^{(k+1)mn}\\
 &=&a^{(k+1)mn}x^{(k+1)mn}=ax.
\end{eqnarray*}
It is easy to obtain that $a^{D}=x$.
\end{proof}

\begin{lemma}\label{a5}  Let $g, e\in {\rm idem}(R)$ and $gR\subseteq eR$. Then there exists $f \in {\rm idem}(R)$ such that $fR=eR$ and $gf=g$.
\begin{proof}
Taking $f=g+(1-g)e$, we have
\begin{eqnarray*}
% \nonumber to remove numbering (before each equation)
 f^{2}&=&g^{2}+g(1-g)e+(1-g)eg+(1-g)e(1-g)e\\
 &=&g+0+0+(1-g)(e-g)e\\
 &=&g+(1-g)e=f.
\end{eqnarray*}
So, $f\in {\rm idem}(R)$. By computation, we can get
\begin{eqnarray*}
% \nonumber to remove numbering (before each equation)
 fe&=&ge+(1-g)e^{2}=e,\\
 ef&=&eg+e(1-g)e=g+(e-g)e=g+(1-g)e=f,\\
 gf&=&g^{2}+g(1-g)e=g.
\end{eqnarray*}
Therefore, there exists $f \in {\rm idem}(R)$ such that $fR=eR$ and $gf=g$.
\end{proof}
\end{lemma}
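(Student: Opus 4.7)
The first step is to extract from $gR\subseteq eR$ the one-sided identity that will actually be used. Since $g=g\cdot 1\in gR\subseteq eR$, there is $r\in R$ with $g=er$, and then $eg=e^{2}r=er=g$. So the content of the hypothesis is the single equation $eg=g$; note that neither $ge=g$ nor anything involving the left ideal is available.

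With only $eg=g$ in hand, the natural ansatz is $f:=g+(1-g)e$. It is designed so that $gf=g^{2}+g(1-g)e=g$ falls out immediately, and the substantive content reduces to two verifications. The first is that $f$ is idempotent. Expanding $f^{2}$ produces four terms; two of them vanish because $g(1-g)=0$, the cross term $(1-g)eg$ vanishes by using exactly the hypothesis $eg=g$ (so that $(1-g)eg=(1-g)g=0$), and the last term reduces via $e^{2}=e$. Collecting, $f^{2}=g+(1-g)e=f$.

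The second verification is that $fR=eR$, which I would prove by exhibiting the two factorizations $ef=f$ and $fe=e$. The former gives $f\in eR$ and hence $fR\subseteq eR$; the latter gives $e\in fR$ and hence $eR\subseteq fR$. Both are short manipulations using only $e^{2}=e$ and $eg=g$. Combined with $gf=g$ already observed, this completes the lemma.

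The only (very modest) obstacle is remembering that only $eg=g$ is available, not $ge=g$, so the perturbation has to be placed as $(1-g)e$ rather than $e(1-g)$; the mirror-image candidate $f=g+e(1-g)$ would instead require the left-handed condition $ge=g$, which does not follow from $gR\subseteq eR$. Morally, the construction is the ring-theoretic analogue of modifying an oblique projection to be the identity on the range of another.
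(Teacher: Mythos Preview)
Your proof is correct and follows essentially the same approach as the paper: both take $f=g+(1-g)e$, verify $f^{2}=f$ using $eg=g$ (extracted from $gR\subseteq eR$), and establish $fR=eR$ via the factorizations $ef=f$ and $fe=e$ together with $gf=g$. Your additional remark explaining why the perturbation must be $(1-g)e$ rather than $e(1-g)$ is a nice clarification not present in the paper.
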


\begin{definition}\label{b1}\emph{\cite{Z}} If $R$ is a proper $\ast$-ring and $a\in R$. Then $a=a_{1}+a_{2}$ is called the group-EP decomposition of $a$, if the following conditions hold:
\begin{itemize}
\item [{\rm (1)}]  $a_{1}\in R^{\#}$;
\item [{\rm (2)}] $a_{2}^{k}=0$, for some $k\in\mathbb{N}^{+}$;
\item [{\rm (3)}] $a_{1}^{*}a_{2}=a_{2}a_{1}=0$.
\end{itemize}
\end{definition}

\section{The existence and uniqueness of weak group inverse}\label{a}

In this section, we give sufficient and necessary conditions under which each element has at most one weak group inverse. At first, we present some equivalent characterizations of the weak group inverse.

\begin{proposition}\label{g5} Let $a\in R$. Then $a$ is weak group invertible if and only if $a$ is Drazin invertible and there exists $f\in {\rm idem}(R)$ such that $fR=aa^{D}R$ and $(a^{D})^{*}af=(a^{D})^{*}a$. In this case, $a^{D}f$ is one of weak group inverses of $a$.
\end{proposition}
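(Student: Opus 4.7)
The plan is to prove the equivalence via two dual constructions that are mutually inverse: starting from a weak group inverse $x$ of $a$ I will set $f := ax$, and starting from the idempotent $f$ I will set $x := a^D f$. The backward direction will then automatically yield the concluding clause that $a^D f$ is a weak group inverse.

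For the forward direction, Lemma \ref{a2} gives $a$ Drazin invertible with $a^D = x^{k+1} a^k$ for the weak group index $k$, and $f := ax$ is idempotent by Corollary \ref{g2}(2). To see $fR = aa^D R$ I would prove both inclusions directly: $ax = aa^D(ax)$ via $aa^D x = x$, and $aa^D = (ax)(a^D a)$ via $axa^D = a^D$, both taken from Corollary \ref{g2}.

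For the backward direction, I would first extract the two identities $aa^D f = f$ and $faa^D = aa^D$ by writing $f = aa^D s$ and $aa^D = ft$ using $fR = aa^D R$. Combining these with the basic Drazin identities $a(a^D)^2 = a^D$ and $aa^D a^j = a^j$ (valid for $j \ge {\rm i}(a)$) yields $fa^D = a^D$ and $fa^{k+1} = a^{k+1}$, where $k := {\rm i}(a)$. Setting $x := a^D f$, the two equations $xa^{k+1} = a^k$ and $ax^2 = x$ are then short substitutions; the third equation $(a^k)^* a^2 x = (a^k)^* a$ reduces, via $a \cdot aa^D f = af$, to $(a^k)^* af = (a^k)^* a$.

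The main obstacle in both directions is converting between the ``$a^k$'' form of the $*$-equation required by the weak group inverse definition and the ``$a^D$'' form appearing in the hypothesis on $f$. The bridge will be the commutativity $aa^D = a^D a$, which for $k \ge {\rm i}(a)$ yields $a^D = a^k(a^D)^{k+1}$ and $a^k = (a^D)^k a^{2k}$. In the forward direction I would apply the involution to $(a^k)^* a^2 x = (a^k)^* a$ to obtain $x^*(a^*)^2 a^k = a^* a^k$, right-multiply by $(a^D)^{k+1}$, and use the first bridging identity to reach $x^*(a^*)^2 a^D = a^* a^D$, which is the involution of the desired $(a^D)^* af = (a^D)^* a$. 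In the backward direction I would left-multiply the hypothesis $(a^D)^* af = (a^D)^* a$ by $((a^D)^*)^{k-1}$ to reach $((a^D)^*)^k af = ((a^D)^*)^k a$, then left-multiply by $(a^{2k})^*$ and use the involution of the second bridging identity, namely $(a^k)^* = (a^{2k})^*((a^D)^*)^k$, to promote the prefactor to $(a^k)^*$.
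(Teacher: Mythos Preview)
Your proposal is correct and follows essentially the same approach as the paper: both directions use the mutually inverse constructions $f := ax$ and $x := a^{D}f$, and both bridge the $(a^{k})^{*}$ and $(a^{D})^{*}$ forms of the $*$-equation via Drazin identities such as $a^{D} = a^{k}(a^{D})^{k+1}$. The paper's bridging is slightly more direct (it substitutes $a^{k} = a^{D}a^{k+1}$ rather than passing through involutions and a two-step left multiplication), but the content is the same.
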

\begin{proof}
 Suppose $a$ is weak group invertible with ${\rm ind}(a)=k$ and $x$ is one of weak group inverses of $a$. By Lemma \ref{a2}, $a$ is Drazin invertible and $(ax)^{2}=ax$. Taking $f=ax$, we have
\begin{eqnarray*}
% \nonumber to remove numbering (before each equation)
  faa^{D}&=&axaa^{D}=axa^{k+1}(a^{D})^{k+1}=aa^{k}(a^{D})^{k+1}=a^{k+1}(a^{D})^{k+1}=aa^{D},\\
  aa^{D}f&=&aa^{D}ax=aa^{D}a^{k+1}x^{k+1}=aa^{k}x^{k+1}=a^{k+1}x^{k+1}=ax,\\
  (a^{D})^{*}af&=&(a^{D})^{*}a^{2}x=(a^{k}(a^{D})^{k+1})^{*}a^{2}x=((a^{D})^{k+1})^{*}(a^{k})^{*}a^{2}x\\
  &=&((a^{D})^{k+1})^{*}(a^{k})^{*}a=(a^{k}(a^{D})^{k+1})^{*}a=(a^{D})^{*}a.
\end{eqnarray*}
Therefore $fR=aa^{D}R$ and $(a^{D})^{*}af=(a^{D})^{*}a$.

Conversely, suppose $a$ is Drazin invertible with ${\rm i}(a)=k.$ Taking $y=a^{D}f$, we have
\begin{eqnarray*}
% \nonumber to remove numbering (before each equation)
  ya^{k+1}&=&a^{D}fa^{k+1}\\
  &=&a^{D}faa^{D}a^{k+1}\\
  &=&a^{D}aa^{D}a^{k+1}\\
  &=&a^{k},\\
  ay^{2}&=&aa^{D}fa^{D}f=fa^{D}f\\
  &=&fa^{D}(aa^{D}f)=(faa^{D})a^{D}f\\
  &=&aa^{D}a^{D}f=y
\end{eqnarray*}
and
\begin{eqnarray*}
% \nonumber to remove numbering (before each equation)
  (a^{k})^{*}a^{2}y&=&(a^{k})^{*}a^{2}a^{D}f\\
  &=&(a^{D}a^{k+1})^{*}af\\
  &=&(a^{k+1})^{*}(a^{D})^{*}af\\
  &=&(a^{k+1})^{*}(a^{D})^{*}a\\
  &=& (a^{k})^{*}a.
\end{eqnarray*}
Therefore $a$ is weak group invertible.
\end{proof}

\begin{corollary}\label{t1} Let $a\in R$. Then $a$ is weak group invertible if and only if $a$ is Drazin invertible and there exists $g\in {\rm idem}(R)\cap aa^{D}R$ such that $(a^{D})^{*}ag=(a^{D})^{*}a$.
\end{corollary}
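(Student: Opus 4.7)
The plan is to reduce the statement to Proposition~\ref{g5} via Lemma~\ref{a5}. The forward implication is nearly immediate: if $a$ is weak group invertible, Proposition~\ref{g5} supplies an idempotent $f$ with $fR = aa^{D}R$ and $(a^{D})^{*}af = (a^{D})^{*}a$. Setting $g:=f$, the element $g$ is idempotent and lies in $fR = aa^{D}R$, so $g \in {\rm idem}(R) \cap aa^{D}R$ and the required equation $(a^{D})^{*}ag = (a^{D})^{*}a$ holds. Hence it suffices to handle the converse.

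For the converse, the hypothesis gives $g \in {\rm idem}(R) \cap aa^{D}R$ with $(a^{D})^{*}ag = (a^{D})^{*}a$, but to invoke Proposition~\ref{g5} one needs a companion idempotent $f$ satisfying the stronger property $fR = aa^{D}R$. The only gap is that $g \in aa^{D}R$ yields the containment $gR \subseteq aa^{D}R$ rather than equality of right ideals. This is exactly the set-up of Lemma~\ref{a5}: since $aa^{D}$ is idempotent (a standard consequence of $a^{D}a = aa^{D}$ together with $a^{D}aa^{D} = a^{D}$) and $gR \subseteq aa^{D}R$, the lemma produces $f \in {\rm idem}(R)$ with $fR = aa^{D}R$ and $gf = g$.

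It then remains to transfer the relation $(a^{D})^{*}ag = (a^{D})^{*}a$ from $g$ to $f$, and this is a one-line check: right-multiplying the hypothesis by $f$ and using $gf = g$ gives
\begin{equation*}
(a^{D})^{*}af = (a^{D})^{*}agf = (a^{D})^{*}ag = (a^{D})^{*}a.
\end{equation*}
Proposition~\ref{g5} applied to $f$ now yields that $a$ is weak group invertible. I do not anticipate any genuine obstacle; the principal conceptual step is recognizing Lemma~\ref{a5} as the device that upgrades the containment $gR \subseteq aa^{D}R$ to the required equality of right ideals, after which everything reduces to the proposition just established.
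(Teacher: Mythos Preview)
Your proposal is correct and follows essentially the same route as the paper: the forward direction quotes Proposition~\ref{g5} directly, and for the converse you apply Lemma~\ref{a5} with $e=aa^{D}$ to upgrade $g$ to an idempotent $f$ with $fR=aa^{D}R$ and $gf=g$, then transfer the equation $(a^{D})^{*}ag=(a^{D})^{*}a$ to $f$ and invoke Proposition~\ref{g5}. The chain $(a^{D})^{*}af=(a^{D})^{*}agf=(a^{D})^{*}ag=(a^{D})^{*}a$ is exactly the paper's computation.
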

\begin{proof}
Suppose $a$ is weak group invertible. By Proposition \ref{g5}, it's obvious that there exists $g\in {\rm idem}(R)\cap aa^{D}R$ such that $(a^{D})^{*}ag=(a^{D})^{*}a$.

 Conversely, suppose there exists $g\in {\rm idem}(R)\cap aa^{D}R$ such that $(a^{D})^{*}ag=(a^{D})^{*}a$. Set $e=aa^{D}$. By Lemma \ref{a5}, there exists $f\in {\rm idem}(R)$ such that  $fR=eR$ and $gf=g$. Then we have $(a^{D})^{*}af=((a^{D})^{*}ag)f=(a^{D})^{*}ag=(a^{D})^{*}a$. By Proposition \ref{g5}, $a$ is weak group invertible.
\end{proof}

\begin{corollary}\label{g6} Let $a\in R$. If $a$ is Drazin invertible, then $a$ has a unique weak group inverse if and only if there exists a unique $f\in {\rm idem}(R)$ such that $fR=aa^{D}R$ and $(a^{D})^{*}af=(a^{D})^{*}a$.
\end{corollary}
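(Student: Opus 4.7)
The plan is to use Proposition \ref{g5} as a two-way bridge between weak group inverses $x$ of $a$ and idempotents $f$ satisfying $fR=aa^{D}R$ and $(a^{D})^{*}af=(a^{D})^{*}a$, so that uniqueness on one side transfers to the other. The proposition already tells us that $x\mapsto ax$ sends a weak group inverse to such an idempotent (this is done in the forward direction of its proof, using Corollary \ref{g2}(2)), while $f\mapsto a^{D}f$ sends such an idempotent to a weak group inverse (this is the converse direction of the proposition). So the heart of the matter is showing that these two assignments are mutually inverse, i.e.\ that $x$ is determined by $ax$ and that $f$ is determined by $a^{D}f$.

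To implement this, I would first record the elementary fact that if $f\in{\rm idem}(R)$ and $fR=aa^{D}R$, then $aa^{D}f=f$ and $faa^{D}=aa^{D}$; this follows by writing $f=aa^{D}u$ and $aa^{D}=fv$ and simplifying using $f^{2}=f$ and $(aa^{D})^{2}=aa^{D}$. Using this together with Corollary \ref{g2}(1) (which gives $aa^{D}x=x$ for any weak group inverse $x$) and the commutation $aa^{D}=a^{D}a$, one gets the key identities $x=aa^{D}x=a^{D}(ax)$ and, for the corresponding $f$-side, the implication ``$a^{D}f_{1}=a^{D}f_{2}\Rightarrow aa^{D}f_{1}=aa^{D}f_{2}\Rightarrow f_{1}=f_{2}$''.

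With these tools in hand, the two directions are short. For ($\Leftarrow$), assume $f$ is unique; given two weak group inverses $x_{1},x_{2}$, set $f_{i}=ax_{i}$, observe from the proof of Proposition \ref{g5} that both $f_{i}$ satisfy the two defining conditions for $f$, conclude $f_{1}=f_{2}$, and then apply $x_{i}=a^{D}f_{i}$ to deduce $x_{1}=x_{2}$. For ($\Rightarrow$), assume the weak group inverse is unique; given two idempotents $f_{1},f_{2}$ satisfying the conditions, set $y_{i}=a^{D}f_{i}$, use the converse direction of Proposition \ref{g5} to see that each $y_{i}$ is a weak group inverse, so $y_{1}=y_{2}$, then multiply by $a$ on the left and use $aa^{D}f_{i}=f_{i}$ to conclude $f_{1}=f_{2}$.

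I do not anticipate a real obstacle here; the argument is essentially bookkeeping once Proposition \ref{g5} and Corollary \ref{g2} are in place. The only mildly delicate point is recognizing that $x=a^{D}f$ automatically holds for \emph{every} weak group inverse $x$ (not just the one constructed in Proposition \ref{g5}), which is what makes the map $x\mapsto ax$ injective and seals the $(\Leftarrow)$ direction.
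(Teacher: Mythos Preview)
Your proposal is correct and takes essentially the same approach as the paper, which simply writes ``It's obvious from the proof of Proposition~\ref{g5}.'' You have just made explicit the bijection $x\leftrightarrow f$ (via $f=ax$ and $x=a^{D}f$) that is implicit in that proof, and verified that it is indeed a bijection using Corollary~\ref{g2}(1) and the relation $aa^{D}f=f$.
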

\begin{proof}It's obvious from the proof of Proposition \ref{g5}.
\end{proof}

The following proposition gives the equivalent condition which ensures that an element in $R$ has a unique weak group inverse.
\begin{proposition}\label{g7}Let $a\in R$. If $a$ is weak group invertible with ${\rm ind}(a)=k$, then the following conditions are equivalent:
\begin{itemize}
\item [{\rm (1)}] $a$ has a unique weak group inverse;
\item [{\rm (2)}] For any element $x\in aa^{D}R(1-aa^{D})$, if $(aa^{D})^{*}x=0$, then $x=0$.
\end{itemize}
\end{proposition}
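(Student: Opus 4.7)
The plan is to reduce both (1) and (2) to the same homogeneous kernel condition. By Corollary \ref{g6}, (1) is equivalent to the uniqueness of an idempotent $f$ with $fR=aa^{D}R$ and $(a^{D})^{*}af=(a^{D})^{*}a$. Setting $e=aa^{D}$, a short direct check shows that the idempotents $f$ with $fR=eR$ are exactly those of the form $f=e+\delta$ with $\delta\in eR(1-e)$ (here $\delta^{2}=0$ follows from $e\delta=\delta$ and $\delta e=0$, and conversely the properties $e\delta=\delta$, $\delta e=0$ give $f^{2}=f$, $ef=f$, $fe=e$). Substituting into $(a^{D})^{*}af=(a^{D})^{*}a$ yields the affine equation $(a^{D})^{*}a\delta=(a^{D})^{*}a(1-e)$. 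Since $a$ is assumed weak group invertible, at least one such $\delta$ exists (e.g.\ $\delta=ay-e$ for any weak group inverse $y$), so uniqueness of $f$ is equivalent to the kernel condition
\[
\forall\,\eta\in eR(1-e),\quad (a^{D})^{*}a\eta=0\ \Longrightarrow\ \eta=0.
\]

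To bridge this kernel condition with (2), I would first replace $(a^{D})^{*}a$ by $(aa^{D})^{*}a$. The two identities $a^{*}(a^{D})^{*}=(a^{D}a)^{*}=(aa^{D})^{*}$ (valid because $a^{D}a=aa^{D}$) and $(a^{D})^{*}=(a^{D})^{*}(aa^{D})^{*}$ (obtained by conjugating $a^{D}=aa^{D}\cdot a^{D}$) give the equivalence in both directions: left-multiplying $(a^{D})^{*}a\eta=0$ by $a^{*}$ produces $(aa^{D})^{*}a\eta=0$, and left-multiplying $(aa^{D})^{*}a\eta=0$ by $(a^{D})^{*}$ recovers $(a^{D})^{*}a\eta=0$. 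Hence the kernel condition above is equivalent to: for every $\eta\in eR(1-e)$, $(aa^{D})^{*}a\eta=0$ forces $\eta=0$.

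Finally I would observe that left multiplication by $a$ is a bijection of $eR(1-e)$ onto itself, with two-sided inverse given by left multiplication by $a^{D}$. Indeed, for $\eta\in eR(1-e)$ we get $a\eta=ae\cdot\eta\in eR(1-e)$ (using $e\cdot ae=ae$ and $\eta\in R(1-e)$), and symmetrically $a^{D}\eta\in eR(1-e)$ (using $ea^{D}=a^{D}$); then $a^{D}a\eta=aa^{D}\eta=e\eta=\eta$ and $aa^{D}\eta=e\eta=\eta$. Substituting $x=a\eta$, which ranges bijectively over $eR(1-e)$ as $\eta$ does, turns the condition "$(aa^{D})^{*}a\eta=0\Rightarrow\eta=0$" into "$(aa^{D})^{*}x=0\Rightarrow x=0$", which is precisely (2). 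The main subtlety is this last step: the kernel condition extracted from uniqueness is naturally stated in terms of $a\eta$ rather than $\eta$, and it is the invertibility of $a$ on the corner $aa^{D}R(1-aa^{D})$ that allows one to strip off the factor $a$ and recover the clean form appearing in (2).
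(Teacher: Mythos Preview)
Your proof is correct, but it follows a genuinely different route from the paper's own argument. The paper works \emph{directly} with the weak group inverse equations: for $(1)\Rightarrow(2)$ it takes a weak group inverse $y$ and an $x\in aa^{D}R(1-aa^{D})$ with $(aa^{D})^{*}x=0$, and checks by hand that $z=y+(a^{D})^{2}x$ is again a weak group inverse, forcing $(a^{D})^{2}x=0$ and hence $x=0$; for $(2)\Rightarrow(1)$ it takes two weak group inverses $y,z$, shows $a^{2}(y-z)\in aa^{D}R(1-aa^{D})$ is killed by $(aa^{D})^{*}$, and concludes $y=z$. In contrast, you pass through Corollary~\ref{g6} and the standard parametrisation $f=e+\delta$, $\delta\in eR(1-e)$, of idempotents $f$ with $fR=eR$, reducing everything to a homogeneous kernel condition on $eR(1-e)$; you then use the bijection $\eta\mapsto a\eta$ on that corner to peel off the extra factor of $a$.

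What each approach buys: the paper's argument is entirely self-contained and produces explicit formulas (e.g.\ $z=y+(a^{D})^{2}x$) that could be reused elsewhere, but requires verifying all three weak group inverse axioms for the perturbed candidate. Your approach is more structural: once Corollary~\ref{g6} is in hand, uniqueness of $f$ is a linear-algebraic question about an affine equation over the Peirce corner $eR(1-e)$, and the only ring-theoretic content left is the (easy) fact that $a$ acts invertibly on that corner. Your route also makes transparent \emph{why} the obstruction lives in $eR(1-e)$, whereas in the paper's proof this appears as the target of a computation. Either method generalises without change to the $m$-weak group setting of Section~\ref{b}.
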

\begin{proof}
${\rm (1)}\Rightarrow{\rm (2)}$: Suppose $y$ is the unique weak group inverse of $a$, $x\in aa^{D}R(1-aa^{D})$ and $(aa^{D})^{*}x=0$. By taking $z=y+(a^{D})^{2}x$, we claim that $z$ is also a weak group inverse of $a$. Since $x\in aa^{D}R(1-aa^{D})$, $aa^{D}x=x$ and $xa^{D}=x(1-aa^{D})a^{D}=0$. By Corollary \ref{g2}, we have $aa^{D}y=y$ and $aya^{D}=a^{D}$. Then we can get
\begin{eqnarray*}
% \nonumber to remove numbering (before each equation)
  za^{k+1}&=&(y+(a^{D})^{2}x)a^{k+1}\\
  &=&a^{k}+(a^{D})^{2}xaa^{D}a^{k+1}\\
  &=&a^{k},\\
  az^{2}&=&a(y^{2}+y(a^{D})^{2}x+(a^{D})^{2}xy+((a^{D})^{2}x)^{2})\\
  &=&a(y^{2}+y(a^{D})^{2}x+(a^{D})^{2}xaa^{D}y)\\
  &=&a(y^{2}+y(a^{D})^{2}x)\\
  &=&y+ay(a^{D})^{2}x\\
  &=&y+(a^{D})^{2}x=z
\end{eqnarray*}
and
\begin{eqnarray*}
% \nonumber to remove numbering (before each equation)
(a^{k})^{*}a^{2}z&=&(a^{k})^{*}a^{2}(y+(a^{D})^{2}x)\\
&=&(a^{k})^{*}a+(a^{k})^{*}a^{2}(a^{D})^{2}x\\
&=&(a^{k})^{*}a+(a^{k})^{*}x\\
&=&(a^{k})^{*}a+(aa^{D}a^{k})^{*}x\\
&=&(a^{k})^{*}a+(a^{k})^{*}(aa^{D})^{*}x\\
&=&(a^{k})^{*}a.
\end{eqnarray*}
Since $a$ has a unique weak group inverse, $y=z$, that is, $(a^{D})^{2}x=0$. Then $x=(aa^{D})^{2}x=a^{2}(a^{D})^{2}x=0$.\\
${\rm (2)}\Rightarrow{\rm (1)}$: Assume that $y$ and $z$ satisfy the conditions of definition of the weak group inverse. By Corollary \ref{g2}, we have $aa^{D}y=y$ and $aya^{D}=a^{D}$ and so is $z$. Taking $x=a^{2}(y-z)$, we have $aa^{D}x=x$ and $xaa^{D}=0$. So $x\in aa^{D}R(1-aa^{D})$. By computation, we can get
\begin{eqnarray*}
% \nonumber to remove numbering (before each equation)
(aa^{D})^{*}x&=&(a^{k}(a^{D})^{k})^{*}a^{2}(y-z)\\
&=&((a^{D})^{k})^{*}(a^{k})^{*}a^{2}(y-z)\\
&=&((a^{D})^{k})^{*}(a^{k})^{*}a-((a^{D})^{k})^{*}(a^{k})^{*}a=0
\end{eqnarray*}
Since $x\in aa^{D}R(1-aa^{D})$ and $(aa^{D})^{*}x=0$, $x=0$, that is, $a^{2}y=a^{2}z$. Then we have $y=(aa^{D})^{2}y=(a^{D})^{2}a^{2}y=(a^{D})^{2}a^{2}z=z$. Therefore $a$ has a unique weak group inverse.
\end{proof}

It is obvious that an idempotent element in $R$ is weak group invertible. By Corollary \ref{g6} and Proposition \ref{g7}, we can obtain the equivalent conditions which guarantee that each element in $R$ has at most one weak group inverse.
\begin{theorem}\label{g8}The following conditions are equivalent:
\begin{itemize}
\item [{\rm (1)}] Each element in $R$ has at most one weak group inverse;
\item [{\rm (2)}] Each idempotent element in $R$ has a unique weak group inverse;
\item [{\rm (3)}] For any $e,f\in  {\rm idem}(R)$, if $eR=fR$ and $e^{*}f=e^{*}e$, then $e=f$;
\item [{\rm (4)}] For any $e\in {\rm idem}(R)$ and $x\in R$, if $e^{*}ex(1-e)=0$, then $ex(1-e)=0$.
\end{itemize}
\end{theorem}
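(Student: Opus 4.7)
The plan is to prove the chain $(1)\Rightarrow(2)\Rightarrow(3)\Rightarrow(4)\Rightarrow(1)$, using Corollary \ref{g6} and Proposition \ref{g7} as the bridges that translate the weak-group-inverse statements (1) and (2) into the purely ring-theoretic statements (3) and (4).

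The implication $(1)\Rightarrow(2)$ is almost free: for any idempotent $e\in R$, taking $x=e$ satisfies $xe^{k+1}=e^{k}$, $ex^{2}=x$ and $(e^{k})^{*}e^{2}x=(e^{k})^{*}e$ already at $k=1$, so $e$ is a weak group inverse of itself; combined with ``at most one'' this gives ``exactly one''. For $(2)\Rightarrow(3)$, I would apply Corollary \ref{g6} to the idempotent $e$: since $e^{D}=e$ and $ee^{D}=e$, uniqueness of the weak group inverse of $e$ becomes uniqueness of an idempotent $g$ with $gR=eR$ and $e^{*}eg=e^{*}e$. Given $e,f$ as in (3), the containment $fR\subseteq eR$ yields $ef=f$, so $e^{*}ef=e^{*}f=e^{*}e$ and $f$ lies in this family; since $e$ trivially does too, uniqueness forces $f=e$.

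For $(3)\Rightarrow(4)$, given $e\in {\rm idem}(R)$ and $x\in R$ with $e^{*}ex(1-e)=0$, the key move is to guess the idempotent $f=e+ex(1-e)$. A short computation using $(1-e)e=0$ shows $f^{2}=f$; factoring $f=e(1+x(1-e))$ and using $fe=e$ yields $fR=eR$; and $e^{*}f=e^{*}e+e^{*}ex(1-e)=e^{*}e$. Then (3) forces $f=e$, i.e.\ $ex(1-e)=0$. For $(4)\Rightarrow(1)$, only the case when $a$ is weak group invertible is nontrivial. By Proposition \ref{g7}, uniqueness of the weak group inverse of $a$ amounts to the implication $(aa^{D})^{*}x=0\Rightarrow x=0$ for all $x\in aa^{D}R(1-aa^{D})$. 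Writing such $x$ as $ey(1-e)$ with $e=aa^{D}$, the hypothesis reads $e^{*}ey(1-e)=0$, and (4) applied to $e$ and $y$ delivers $ey(1-e)=x=0$.

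The main obstacle is $(3)\Rightarrow(4)$: one has to produce the right perturbation of $e$ so that the new element is simultaneously idempotent, generates the same right ideal, and satisfies the $\ast$-condition. Once the candidate $f=e+ex(1-e)$ is identified the verifications are routine, but spotting it is the only genuinely creative step; everything else reduces to bookkeeping with the identities $ef=f$ and $fe=e$ obtained from $eR=fR$, together with the two earlier results that repackage weak group inverses as idempotent-level data.
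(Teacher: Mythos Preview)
Your proposal is correct and follows essentially the same route as the paper: the same chain $(1)\Rightarrow(2)\Rightarrow(3)\Rightarrow(4)\Rightarrow(1)$, the same appeal to Corollary~\ref{g6} for $(2)\Rightarrow(3)$, the same reduction via Proposition~\ref{g7} for $(4)\Rightarrow(1)$, and the same perturbation trick for $(3)\Rightarrow(4)$, the only cosmetic difference being that the paper takes $f=e-ex(1-e)$ while you take $f=e+ex(1-e)$, which works identically.
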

\begin{proof}
${\rm (1)}\Rightarrow{\rm (2)}$: Obvious.\\
${\rm (2)}\Rightarrow{\rm (3)}$: Suppose that $e,f\in {\rm idem}(R)$, $eR=fR$ and $e^{*}f=e^{*}e$. By Corollary\ref{g6}, we can easily get $e=f$. \\
${\rm (3)}\Rightarrow{\rm (4)}$: Suppose $e\in {\rm idem}(R)$, $x\in R$ and $e^{*}ex(1-e)=0$. Taking $f=e-ex(1-e)$, we have $f^{2}=f$, $ef=f$, $fe=e$ and $e^{*}f=e^{*}e$. So, $f\in {\rm idem}(R)$ and $eR=fR$. By the condition ${\rm (3)}$, we have $e=f$, that is, $ex(1-e)=0$. \\
${\rm (4)}\Rightarrow{\rm (1)}$: Suppose $a\in R$ is weak group invertible. By the condition ${\rm (4)}$, we can get that if $(aa^{D})^{*}x=0$, then $x=0$ for any $x\in aa^{D}R(1-aa^{D})$. By Proposition \ref{g7}, $a$ has a unique weak group inverse.
\end{proof}

Now, we give an example to show the application of Theorem \ref{g8}.

\begin{example} Let $R=M_{2}(\mathbb{C})$. Taking the involution is transpose. Set $A=\left(\begin{matrix}
1&0\\
i&0
\end{matrix}
\right)$ and $B=\left(\begin{matrix}
0&1\\
0&0
\end{matrix}
\right)$. Since $A^2=A$, $A^{*}AB(E-A)=0$ and $AB(E-A)\neq0$, $R$ is not a weak proper $\ast$-ring by Theorem \ref{g8}. In fact, it is easy to verify that $X_{1}=\left(\begin{matrix}
1&0\\
i&0
\end{matrix}
\right)$ and $X_{2}=\left(\begin{matrix}
0&-i\\
0&1
\end{matrix} 
\right)$ are two weak group inverses of $A$.
\end{example}

If each element in $R$ has at most one weak group inverse, $R$ is called weak proper $\ast$-ring.

\begin{corollary}\label{r2} If each idempotent element in $R$ is left $\ast$-cancellable, then $R$ is
a weak proper $\ast$-ring.
\end{corollary}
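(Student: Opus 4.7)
The plan is to reduce the statement to condition (4) of Theorem \ref{g8}, which says that $R$ is a weak proper $\ast$-ring if and only if for every $e\in {\rm idem}(R)$ and every $x\in R$, the implication $e^{*}ex(1-e)=0 \Rightarrow ex(1-e)=0$ holds. This characterization does the heavy lifting, so the corollary becomes almost immediate from the hypothesis.

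First I would fix an arbitrary idempotent $e\in {\rm idem}(R)$ and an arbitrary element $x\in R$, and assume $e^{*}ex(1-e)=0$. The key observation is that the vanishing relation can be rewritten as $e^{*}e\cdot x(1-e)=e^{*}e\cdot 0$, matching exactly the hypothesis pattern of the left $\ast$-cancellability property in Definition \ref{g1}. Since $e$ is an idempotent, the assumption of the corollary guarantees $e$ is left $\ast$-cancellable. Applying this to the previous equation yields $e\cdot x(1-e) = e\cdot 0 = 0$, i.e.\ $ex(1-e)=0$, which is precisely condition (4) of Theorem \ref{g8}.

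By the equivalence $(1)\Leftrightarrow(4)$ of Theorem \ref{g8}, $R$ is then a weak proper $\ast$-ring. There is no real obstacle here; the only thing one has to notice is that the factor $e^{*}e$ that appears in the characterization in Theorem \ref{g8}(4) is precisely the factor the left $\ast$-cancellability property cancels, and that the relevant element $e$ is itself idempotent so that the hypothesis of the corollary applies. The proof is therefore a one-line deduction once the two earlier statements are put side by side.
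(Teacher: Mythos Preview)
Your proposal is correct and matches the paper's intended approach: the paper states Corollary~\ref{r2} without proof, treating it as an immediate consequence of Theorem~\ref{g8}, and your verification via condition~(4) of that theorem is exactly the one-line deduction the paper has in mind.
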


The following example shows that an idempotent element in a weak proper $\ast$-ring may not be left $\ast$-cancellable. In addition, this example shows that $R$ is a weak proper $\ast$-ring, but $R$ is not a proper $\ast$-ring.

\begin{example}\label{e1}Let $G=\{e,a,b,c\}$ be a non-cyclic group. Let $R=\mathbb {Z}_{3}G$ with an involution, where $(x_{1}e+x_{2}a+x_{3}b+x_{4}c)^{*}=x_{1}e+x_{2}b+x_{3}a+x_{4}c$. Since $R$ is a commutative ring, $R$ is a weak proper $\ast$-ring. Since $(2e+a)^{*}(2e+a)(2e+2b)=0$ and $(2e+a)(2e+2b)\ne 0$, $(2e+a)^{2}=2e+a$ is not left $\ast$-cancellable. Also, $R$ is not a proper $\ast$-ring.
\end{example}

\begin{example} Let $R=\mathbb{R} \times \mathbb{R}$ with an involution, where $(a,b)^*=(b,a)$ for any $a, b\in \mathbb{R}$. Since $R$ is a commutative ring, $R$ is a weak proper $\ast$-ring. Set $e=(1,0)$. Since $e^*e=0$ and $e\neq0$, $e=e^2$ is not left $\ast$-cancellable. That is, an idempotent element in a weak proper $\ast$-ring may not be left $\ast$-cancellable.
\end{example}

The following example shows that when each idempotent element in $R$ is left $\ast$-cancellable, $R$ may not be a proper $\ast$-ring.

\begin{example}Let $R=\mathbb{Z}_{4}$ with an involution, where $a^*=a$ for any $a\in R$. Since $2^{*}\times2=0$ and $2\neq0$, $R$ is not a proper $\ast$-ring. It is obvious that each idempotent element in $R$ is left $\ast$-cancellable.
\end{example}

Now, we can give an existence criteria for weak group inverses in a ring where each idempotent element is left $\ast$-cancellable.

\begin{proposition}\label{g9}If each idempotent element in $R$ is left $\ast$-cancellable and $a\in R$ is Drazin invertible with ${\rm i}(a)=k$, then $a$ is weak group invertible if and only if there exists $x\in R$ such that $(a^{D})^{*}a=(a^{D})^{*}a^{D}x$. In this case, $a^{\scriptsize\textcircled{\tiny W}}=(a^{D})^{3}x$.
\end{proposition}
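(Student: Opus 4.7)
The plan is to reduce both implications to Proposition~\ref{g5}, treating the explicit formula $a^{\scriptsize\textcircled{\tiny W}}=(a^D)^{3}x$ as a separate simplification step where the left $\ast$-cancellability hypothesis is actually spent.

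For the forward direction, let $w$ be a weak group inverse of $a$. Applying Proposition~\ref{g5} to $w$ I obtain the idempotent $f:=aw$ with $fR=aa^{D}R$ and $(a^{D})^{*}af=(a^{D})^{*}a$. Since Corollary~\ref{g2}(1) gives $aa^{D}w=w$, the element $a^{2}w$ lies in $aa^{D}R$, hence $a^{2}w=aa^{D}\cdot a^{2}w=a^{D}\cdot a^{3}w$. Setting $x:=a^{3}w$ then yields
\[(a^{D})^{*}a^{D}x=(a^{D})^{*}a^{2}w=(a^{D})^{*}af=(a^{D})^{*}a,\]
which is the required identity. Left $\ast$-cancellability plays no role in this direction.

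For the converse, starting from $(a^{D})^{*}a=(a^{D})^{*}a^{D}x$, I propose the idempotent
\[f:=aa^{D}+(a^{D})^{2}x(1-aa^{D}).\]
Each of the three conditions of Proposition~\ref{g5} is a short calculation: the cross-term in $f^{2}$ vanishes because $(1-aa^{D})(a^{D})^{2}=(a^{D})^{2}-aa^{D}(a^{D})^{2}=0$; the relations $aa^{D}f=f$ and $faa^{D}=aa^{D}$ produce $fR=aa^{D}R$; and the identity $a(a^{D})^{2}=a^{D}$ combined with the hypothesis collapses $(a^{D})^{*}af$ to $(a^{D})^{*}a$. Proposition~\ref{g5} then delivers a weak group inverse
\[w:=a^{D}f=a^{D}+(a^{D})^{3}x-(a^{D})^{3}x\cdot aa^{D},\]
unique by Corollary~\ref{r2}.

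The main obstacle is to prove the simplification $(a^{D})^{3}x\cdot aa^{D}=a^{D}$, since this is what collapses $w$ to the announced form $(a^{D})^{3}x$. My plan is to multiply the hypothesis on the left by $a^{*}$, producing $e^{*}a=e^{*}a^{D}x$ with $e:=aa^{D}=a^{D}a$, and then multiply both sides on the right by $aa^{D}$. Using $a\cdot aa^{D}=aa^{D}\cdot a$ and the Drazin identity $ea^{D}=aa^{D}\cdot a^{D}=a^{D}$, both sides can be rewritten in the form $e^{*}e\cdot z$, namely $e^{*}ea=e^{*}e\cdot a^{D}x\cdot aa^{D}$. Left $\ast$-cancellability of the idempotent $e$ then forces $aa^{D}a=a^{D}x\cdot aa^{D}$, and substituting this into
\[(a^{D})^{3}x\cdot aa^{D}=(a^{D})^{2}\bigl(a^{D}x\cdot aa^{D}\bigr)=(a^{D})^{2}(aa^{D}a)=(a^{D})^{2}\cdot a=a^{D}\]
finishes the identification. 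The delicate point here is that $e^{*}$ itself is not equal to $e^{*}e$, but after the right-multiplication by $aa^{D}$ one can still coax both sides into the exact $e^{*}e\cdot z$ shape that the cancellation demands.
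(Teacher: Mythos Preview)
Your proof is correct. The forward direction is essentially the paper's: both set $x=a^{3}w$ for a weak group inverse $w$ and compute $(a^{D})^{*}a^{D}x=(a^{D})^{*}a^{2}w=(a^{D})^{*}a$.

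For the converse, however, the routes diverge. The paper immediately multiplies the hypothesis on the left by $a^{*}$ and on the right by $a^{D}$ to obtain $(aa^{D})^{*}aa^{D}=(aa^{D})^{*}aa^{D}\cdot a^{D}xa^{D}$, applies left $\ast$-cancellability of $aa^{D}$ to deduce $aa^{D}=a^{D}xa^{D}$, and then directly checks the three defining equations for $y=(a^{D})^{3}x$. Your approach instead builds the idempotent $f=aa^{D}+(a^{D})^{2}x(1-aa^{D})$ and invokes Proposition~\ref{g5} to produce the weak group inverse $a^{D}f$, only afterwards using $\ast$-cancellability (via the equivalent identity $aa^{D}a=a^{D}x\cdot aa^{D}$) to collapse $a^{D}f$ to $(a^{D})^{3}x$. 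The paper's argument is shorter and more direct; your detour through Proposition~\ref{g5} is longer but has the pleasant side effect of showing that the mere \emph{existence} of a weak group inverse in the converse direction does not require the $\ast$-cancellability hypothesis at all---it is needed only to pin down the explicit formula and, via Corollary~\ref{r2}, uniqueness.
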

\begin {proof} Suppose $a$ is weak group invertible. Taking $x=a^{3}a^{\scriptsize\textcircled{\tiny W}}$, by Corollary \ref{g2} we have
\begin{eqnarray*}
% \nonumber to remove numbering (before each equation)
(a^{D})^{*}a^{D}x&=&(a^{D})^{*}a^{D}a^{3}a^{\scriptsize\textcircled{\tiny W}}\\
&=&(a^{D})^{*}a^{2}a^{\scriptsize\textcircled{\tiny W}}\\
&=&(a^{k}(a^{D})^{k+1})^{*}a^{2}a^{\scriptsize\textcircled{\tiny W}}\\
&=&((a^{D})^{k+1})^{*}(a^{k})^{*}a^{2}a^{\scriptsize\textcircled{\tiny W}}\\
&=&((a^{D})^{k+1})^{*}(a^{k})^{*}a\\
&=&(a^{D})^{*}a.
\end{eqnarray*}
Therefore there exists $x\in R$ such that $(a^{D})^{*}a=(a^{D})^{*}a^{D}x$.

Conversely, if there exists $x\in R$ such that $(a^{D})^{*}a=(a^{D})^{*}a^{D}x$. Since $(a^{D})^{*}a=(a^{D})^{*}a^{D}x$, $(aa^{D})^{*}aa^{D}=(aa^{D})^{*}aa^{D}(a^{D}xa^{D})$. Because each idempotent element in $R$ is left $\ast$-cancellable, $aa^{D}=a^{D}xa^{D}$. Taking $y=(a^{D})^{3}x$, we have
\begin{eqnarray*}
% \nonumber to remove numbering (before each equation)
ya^{k+1}&=&(a^{D})^{3}xa^{k+1}\\
&=&(a^{D})^{3}xa^{D}a^{k+2}\\
&=&(a^{D})^{2}aa^{D}a^{k+2}\\
&=&a^{k},\\
ay^{2}&=&a(a^{D})^{3}x(a^{D})^{3}x\\
&=&a(a^{D})^{2}aa^{D}(a^{D})^{2}x\\
&=&(a^{D})^{3}x=y
\end{eqnarray*}
and
\begin{eqnarray*}
% \nonumber to remove numbering (before each equation)
(a^{k})^{*}a^{2}y&=&(a^{k})^{*}a^{D}x\\
&=&(a^{D}a^{k+1})^{*}a^{D}x\\
&=&(a^{k+1})^{*}(a^{D})^{*}a^{D}x\\
&=&(a^{k+1})^{*}(a^{D})^{*}a\\
&=&(a^{k})^{*}a.
\end{eqnarray*}
Therefore $a$ is weak group invertible.
\end{proof}

Zhou et al.\cite{Z} proved that an element in a proper $\ast$-ring is weak group invertible if and only if this element has group-EP decomposition. We now generalize the group-EP decomposition to weak proper $\ast$-rings.
\begin{definition} If $R$ is a weak proper $\ast$-ring. Let $a\in R$. Then $a=a_{1}+a_{2}$ is called the group-EP decomposition of $a$, if the following conditions hold:
\begin{itemize}
\item [{\rm (1)}]  $a_{1}\in R^{\#}$;
\item [{\rm (2)}] $a_{2}^{k}=0$, for some $k\in\mathbb{N}^{+}$;
\item [{\rm (3)}] $a_{1}^{*}a_{2}=a_{2}a_{1}=0$.
\end{itemize}
\end{definition}

\begin{theorem}\label{b2} If $R$ is a weak proper $\ast$-ring. Let $a\in R$. Then $a$ is weak group invertible if and only if $a$ has group-EP decomposition.
\end{theorem}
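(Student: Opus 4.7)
Plan: I will prove both implications by direct construction. For the backward direction, the candidate weak group inverse is $x := a_1^{\#}$. For the forward direction, the candidate decomposition is $a_1 := ap = a^2 a^{\scriptsize\textcircled{\tiny W}}$ and $a_2 := a(1-p)$, with $p := aa^{\scriptsize\textcircled{\tiny W}}$.

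For the backward direction, two preparatory identities drive everything. First, $a_2 a_1 = 0$ together with $a_1^{\#} = a_1^{\#} a_1 a_1^{\#}$ gives $a_2 a_1^{\#} = 0$. Second, $a_2 a_1 = 0$ yields the expansion $a^m = \sum_{i=0}^{m} a_1^{m-i} a_2^i$ by induction. From the first, $ax^2 = (a_1 + a_2)(a_1^{\#})^2 = a_1(a_1^{\#})^2 = a_1^{\#} = x$, and $a^2 a_1^{\#} = a_1$. The expansion together with $a_2^k = 0$ yields $xa^{k+1} = a^k$. For the third weak group equation, $a_1^* a_2 = 0$ iterates to $(a_1^*)^j a_2 = 0$ for all $j \geq 1$, while $(a_2^*)^k = (a_2^k)^* = 0$; these two facts together kill every summand in the adjoint expansion of $(a^k)^*$ when multiplied by $a_2$, so $(a^k)^* a_2 = 0$. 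Combining, $(a^k)^* a^2 x = (a^k)^* a_1 = (a^k)^* a$.

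For the forward direction, I verify the four group-EP conditions. The identity $a_2 a_1 = 0$ reduces to $(1-p)ap = 0$; this holds because $aa^D \cdot a^2 a^{\scriptsize\textcircled{\tiny W}} = a^2(aa^D a^{\scriptsize\textcircled{\tiny W}}) = a^2 a^{\scriptsize\textcircled{\tiny W}}$ by Corollary \ref{g2}(1), so $ap \in aa^D R = pR$ (by Proposition \ref{g5}), whence $pap = ap$. For the nilpotency of $a_2$, set $b := (1-p)a$ and show $b^m = (1-p)a^m$ by induction, the inductive step using $(1-p)ap = 0$; since $xa^{k+1} = a^k$ gives $pa^{k+1} = a^{k+1}$, one has $b^{k+1} = 0$ and so $a_2^{k+2} = ab^{k+1}(1-p) = 0$. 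For $a_1^* a_2 = p^* a^* a(1-p) = 0$, I rewrite the third weak group equation as $(a^k)^* a(1-p) = 0$, take adjoints to obtain $(1-p)^* a^* a^k = 0$, and multiply on the right by $(a^{\scriptsize\textcircled{\tiny W}})^{k-1}$; using $a^{k-1}(a^{\scriptsize\textcircled{\tiny W}})^{k-1} = p$ from Lemma \ref{a2}(1), the right-hand factor simplifies to $ap$, yielding $(1-p)^* a^* ap = 0$, whose adjoint is the required identity (the case $k=1$ is handled by post-multiplying $a^* a(1-p)=0$ by $p^*$).

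The decisive step is showing $a_1 \in R^{\#}$ with $(a_1)^{\#} = a^{\scriptsize\textcircled{\tiny W}}$. Setting $q := a^{\scriptsize\textcircled{\tiny W}} a$, the non-trivial identity is $qp = p$, which I prove by applying Lemma \ref{a2}(1) at exponent $m = k$ to write $p = a^k (a^{\scriptsize\textcircled{\tiny W}})^k$ and computing
\[ qp = a^{\scriptsize\textcircled{\tiny W}} a \cdot a^k (a^{\scriptsize\textcircled{\tiny W}})^k = (a^{\scriptsize\textcircled{\tiny W}} a^{k+1})(a^{\scriptsize\textcircled{\tiny W}})^k = a^k(a^{\scriptsize\textcircled{\tiny W}})^k = p. \]
Combined with $pa^{\scriptsize\textcircled{\tiny W}} = a^{\scriptsize\textcircled{\tiny W}}$ (from $a(a^{\scriptsize\textcircled{\tiny W}})^2 = a^{\scriptsize\textcircled{\tiny W}}$) and $pap = ap$, this yields $a_1 \cdot a^{\scriptsize\textcircled{\tiny W}} = a^{\scriptsize\textcircled{\tiny W}} \cdot a_1 = p$, from which the remaining group-inverse axioms for $a_1$ follow routinely. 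The main obstacle is precisely this identity $qp = p$: it does not follow from $a(a^{\scriptsize\textcircled{\tiny W}})^2 = a^{\scriptsize\textcircled{\tiny W}}$ applied at fixed small exponents, but crucially requires Lemma \ref{a2}(1) at the specific exponent $m = k$ so that the defining relation $a^{\scriptsize\textcircled{\tiny W}} a^{k+1} = a^k$ can be invoked inside.
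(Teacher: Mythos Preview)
Your proof is correct in both directions. The backward direction matches what the paper defers to \cite{Z}, and your explicit verification is sound.

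The forward direction, however, takes a genuinely different route from the paper's on the nilpotency of $a_2$. The paper computes $a_2^{k}=a^{k}-a^{2}a^{\scriptsize\textcircled{\tiny W}}a^{k-1}$ (where $k={\rm ind}(a)$), observes that $a_2^{k}\in aa^{D}R(1-aa^{D})$ and $(aa^{D})^{*}a_2^{k}=0$, and then invokes the weak proper $\ast$-ring hypothesis through Theorem~\ref{g8} to conclude $a_2^{k}=0$. By contrast, you never use the weak proper $\ast$-ring assumption: setting $b=(1-p)a$ you get $b^{k+1}=(1-p)a^{k+1}=0$ directly from $a^{\scriptsize\textcircled{\tiny W}}a^{k+1}=a^{k}$, and hence $a_2^{k+2}=ab^{k+1}(1-p)=0$. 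Your argument is thus more elementary and in fact shows that the forward implication holds in \emph{any} $\ast$-ring (replace $a^{\scriptsize\textcircled{\tiny W}}$ by an arbitrary weak group inverse); the price is the weaker nilpotency index $k+2$ instead of the paper's sharp value $k$. For the theorem as stated, either suffices, since the group-EP definition only demands nilpotency for \emph{some} exponent.

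One small slip: in the $k=1$ case you write ``post-multiplying $a^{*}a(1-p)=0$ by $p^{*}$'', but you mean pre-multiplying; indeed $p^{*}\cdot a^{*}a(1-p)=p^{*}\cdot 0=0$ gives $a_1^{*}a_2=0$ immediately.
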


\begin{proof}  Suppose $a^{\scriptsize\textcircled{\tiny W}}$ exists with ${\rm ind}(a)=k$. Set $a_{1}=a^{2}a^{\scriptsize\textcircled{\tiny W}}$ and $a_{2}=a-a^{2}a^{\scriptsize\textcircled{\tiny W}}$. By a similar proof of Theorem 4.2 in \cite{Z}, we have $a_2a_1=a_1^{*}a_2=0$ and $a_1$ is group invertible. Now, we only need to prove $a_2^{k}=0.$ By induction, we have $a_{2}^{k}=(a-a^{2}a^{\scriptsize\textcircled{\tiny W}})^{k}=a^{k}-a^{2}a^{\scriptsize\textcircled{\tiny W}}a^{k-1}$. Then we can get $aa^{D}a_{2}^{k}=a_{2}^{k}$ and $a_{2}^{k}(1-aa^{D})=a_{2}^{k}$ by Corollary \ref{g2}. So $a_{2}^{k}\in aa^{D}R(1-aa^{D})$. Then we have
\begin{eqnarray*}
% \nonumber to remove numbering (before each equation)
(aa^{D})^{*}a_{2}^{k}&=&(aa^{D})^{*}(a^{k}-a^{2}a^{\scriptsize\textcircled{\tiny W}}a^{k-1})\\
&=&(a^{k}(a^{D})^{k})^{*}(a^{k}-a^{2}a^{\scriptsize\textcircled{\tiny W}}a^{k-1})\\
&=&((a^{D})^{k})^{*}((a^{k})^{*}a^{k}-(a^{k})^{*}a^{2}a^{\scriptsize\textcircled{\tiny W}}a^{k-1})\\
&=&0
\end{eqnarray*}
By Theorem \ref{g8}, we can get $a_2^{k}=0$.

Conversely, it's similar to the proof in \cite{Z}.
\end{proof}

\begin{theorem} If $R$ is a weak proper $\ast$-ring, then the group-EP decomposition in $R$ is unique.
\end{theorem}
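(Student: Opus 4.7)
My plan is to argue that in any group-EP decomposition $a = a_1 + a_2$ the first summand is recovered from $a$ by the formula $a_1 = a^{2} a^{\scriptsize\textcircled{\tiny W}}$. Since $R$ is a weak proper $*$-ring, Theorem \ref{g8} forces $a^{\scriptsize\textcircled{\tiny W}}$ to be uniquely determined by $a$, so $a_1$, and then $a_2 = a - a_1$, cannot depend on the decomposition. The crux is therefore to show that $a_1^{\#}$ is a weak group inverse of $a$.

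The two workhorse identities are $a_2 a_1^{\#} = a_2 a_1 (a_1^{\#})^{2} = 0$ (combining $a_2 a_1 = 0$ with $a_1 a_1^{\#} = a_1^{\#} a_1$) and the expansion
\[
a^{n} = (a_1+a_2)^{n} = \sum_{j=0}^{n} a_1^{j} a_2^{\,n-j}, \qquad n \geq 1,
\]
which holds because any monomial in $a_1, a_2$ containing some $a_2$ strictly before an $a_1$ already vanishes. From these I extract two consequences: $a^{2} a_1^{\#} = (a_1^{2} + a_1 a_2 + a_2^{2}) a_1^{\#} = a_1$ (the last two summands die by the workhorse identity) and, from $a_1^{*} a_2 = 0$, that $(a_1^{*})^{i} a_2 = 0$ for every $i \geq 1$, whence $(a^{k})^{*} a_2 = 0$. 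Fixing $k \in \mathbb{N}^{+}$ with $a_2^{k} = 0$ and setting $x := a_1^{\#}$, the three defining equations of the weak group inverse now follow: for $x a^{k+1} = a^{k}$ the $j = 0, 1$ terms of $a^{k+1}$ vanish by nilpotency of $a_2$, and for $j \geq 2$ the factor $a_1^{\#} a_1^{j}$ collapses to $a_1^{j-1}$, so the remainder reindexes to $a^{k}$; for $ax^{2} = x$ one has $a(a_1^{\#})^{2} = a_1 (a_1^{\#})^{2} + a_2 (a_1^{\#})^{2} = a_1^{\#}$; and for $(a^{k})^{*} a^{2} x = (a^{k})^{*} a$ we compute $(a^{k})^{*} a^{2} a_1^{\#} = (a^{k})^{*} a_1 = (a^{k})^{*}(a - a_2) = (a^{k})^{*} a$.

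Given two decompositions $a = a_1 + a_2 = b_1 + b_2$, both $a_1^{\#}$ and $b_1^{\#}$ now equal $a^{\scriptsize\textcircled{\tiny W}}$ by Theorem \ref{g8}, so $a_1 = a^{2} a^{\scriptsize\textcircled{\tiny W}} = b_1$ and hence $a_2 = b_2$, yielding the uniqueness. I expect the main obstacle to be the bookkeeping inside condition (i): one must verify both that $a_2^{k} = 0$ eliminates the low-index terms of the expansion of $a^{k+1}$ and that the residual sum, after absorbing $a_1^{\#}$ on the left, exactly reproduces the expansion of $a^{k}$; once this is in hand, the remaining verifications are short applications of the two workhorse identities.
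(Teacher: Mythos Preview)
Your argument is correct and follows the natural route: from any group-EP decomposition $a=a_1+a_2$ you verify that $x=a_1^{\#}$ satisfies the three defining equations of the weak group inverse, whence $a_1=a^{2}a_1^{\#}=a^{2}a^{\scriptsize\textcircled{\tiny W}}$ is uniquely determined once $R$ is weak proper. The paper itself does not spell out a proof but defers to \cite{Z}; your computation is precisely the content of that deferred argument (and is implicit already in the converse direction of Theorem~\ref{b2}), so the approaches coincide.
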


\begin{proof}  It is similar to the proof in \cite{Z}.
\end{proof}

\section{$m$-weak group inverse}\label{b}
In this section, we stipulate that $a^{0}=1$ for any $a\in R$. Then we present the definition of the $m$-weak group inverse.
\begin{definition}\label{r3}Let $a\in R$ and $m\in \mathbb N.$ If there exist $x\in R$ and $k\in \mathbb{N}^{+}$ such that
\begin{equation*}
 {\rm (i)}~ xa^{k+1}=a^{k},~~~{\rm (ii)} ~ax^{2}=x,~~~{\rm (iii)}~ (a^{k})^{*}a^{m+1}x=(a^{k})^{*}a^{m},
\end{equation*}
then $x$ is called the $m$-weak group inverse of $a$. When the $m$-weak group inverse of $a$ is unique, it's denoted by $a^{\scriptsize\textcircled{\tiny W}_m}$.\\
\end{definition}

By Lemma \ref{a2}, if $a$ is $m$-weak group invertible, then $a$ is Drazin invertible. If $a$ is Drazin invertible with ${\rm i}(a)=k$, then $a$ is $(k+s)$-weak group invertible for any $s\in \mathbb{N}$ and $a^{D}$ is one of $(m+s)$-weak group inverses of $a$. When $a$ is $m$-weak group invertible, the $m$-weak group inverse of $a$ may not be unique. We can define the $m$-weak group index which is similar to the weak group index. The smallest positive integer $k$ satisfying Eqs. ${\rm (i)}$-${\rm (iii)}$ can be called the $m$-weak group index of $a$ and denoted by ${\rm ind_m}(a)=k$. In addition, we have ${\rm ind_m}(a)={\rm i}(a)$ if $a$ is $m$-weak group invertible. Then, we can give another equivalent definition of the $m$-weak group inverse in the following.\\

\begin{proposition}\label{t2} Let $a\in R$ and $m\in \mathbb N$. Then $a$ is $m$-weak group invertible if and only if there exist $x\in R$ and $k\in \mathbb{N}^{+}$ such that
\begin{equation*}
 {\rm (i)}~ xa^{k+1}=a^{k},~~~{\rm (ii)} ~ax^{2}=x,~~~{\rm (iv)}~ ((a^{m})^{*}a^{m+1}x)^{*}=(a^{m})^{*}a^{m+1}x.
 \end{equation*}
\end{proposition}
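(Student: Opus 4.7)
The plan is to prove (iii)$\Leftrightarrow$(iv) by two separate implications. The forward direction keeps the same witnesses $(x,k)$, while the backward direction produces the witnesses $(x,k+m)$ — acceptable because $m$-weak group invertibility is an existential assertion on $k$, and conditions (i),(ii) are preserved when $k$ is increased.

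For (iii)$\Rightarrow$(iv), I would split into two cases. If $k\le m$, left-multiplying (iii) by $(a^{m-k})^{*}$ and using $(a^{m-k})^{*}(a^{k})^{*}=(a^{m})^{*}$ collapses the equation to $(a^{m})^{*}a^{m+1}x=(a^{m})^{*}a^{m}$; the right-hand side has the form $A^{*}A$ and is therefore self-adjoint, which is (iv). If $k>m$, I would take the involution of (iii) to obtain $x^{*}(a^{m+1})^{*}a^{k}=(a^{m})^{*}a^{k}$ and then right-multiply by $x^{k-m}$. The decisive simplification is $a^{k}x^{k-m}=a^{m}\cdot a^{k-m}x^{k-m}=a^{m}\cdot ax=a^{m+1}x$, by Lemma~\ref{a2}(1) applied with exponent $k-m\ge 1$. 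This reduces the identity to $(a^{m+1}x)^{*}(a^{m+1}x)=(a^{m})^{*}a^{m+1}x$, whose left-hand side is of self-adjoint form, giving (iv).

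For (iv)$\Rightarrow$(iii) I would first record the auxiliary identity $ua^{k}=0$, where $u:=a^{m+1}x-a^{m}$. This follows from $(ax)a^{k}=a^{k}$: since $xa^{k+1}=a^{k}$ one has $(ax)a^{k+1}=a^{k+1}$, and using the Drazin inverse supplied by Lemma~\ref{a2}(3), right-multiplication by $a^{D}$ yields $(ax)a^{k}=a^{k}$, whence $ua^{k}=a^{m}(ax)a^{k}-a^{m+k}=0$. Writing (iv) equivalently as $(a^{m})^{*}u=u^{*}a^{m}$ (subtract the self-adjoint $(a^{m})^{*}a^{m}$ from each side of $(a^m)^*a^{m+1}x = x^*(a^{m+1})^*a^m$), I compute
\[
(a^{k+m})^{*}u \;=\; (a^{k})^{*}(a^{m})^{*}u \;=\; (a^{k})^{*}u^{*}a^{m} \;=\; (ua^{k})^{*}a^{m} \;=\; 0,
\]
which is precisely (iii) for the index $k':=k+m$. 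That (i),(ii) persist for $(x,k')$ is routine: right-multiplying $xa^{k+1}=a^{k}$ by $a^{m}$ gives $xa^{k'+1}=a^{k'}$, while (ii) is independent of $k$. Hence $a$ is $m$-weak group invertible.

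The main subtlety is the asymmetric handling of $k$: the backward direction cannot, in general, be carried out with the original $k$, and the bump to $k+m$ is essential. The bridge is the annihilation identity $ua^{k}=0$, which combined with the involutory commutation (iv) propagates the weak-group relation up to $(a^{k+m})^{*}u=0$.
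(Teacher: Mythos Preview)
Your proof is correct. Both directions go through as written: the case split in (iii)$\Rightarrow$(iv) is sound (the $k>m$ branch correctly uses $a^{k-m}x^{k-m}=ax$ from Lemma~\ref{a2}(1)), and the backward direction's annihilation trick $ua^{k}=0$ together with the symmetry $(a^{m})^{*}u=u^{*}a^{m}$ cleanly yields $(a^{k+m})^{*}u=0$.

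The paper takes a different, more uniform route. For (iii)$\Rightarrow$(iv) it avoids the case split by expanding $a^{m+1}x=a^{m+1+k}x^{k+1}$ via Lemma~\ref{a2}(1), then applying (iii) inside the resulting expression to reach $(a^{m+1}x)^{*}(a^{m+1}x)$ in one stroke, regardless of the relation between $k$ and $m$. For (iv)$\Rightarrow$(iii) the paper keeps the \emph{same} index $k$: it writes $(a^{k})^{*}=(xa^{k+1})^{*}=(a^{m}x^{m+1}a^{k+1})^{*}$ using $x=a^{m}x^{m+1}$, inserts (iv), and then collapses $a^{m+1}x^{m+2}=x$ and $xa^{k+1}=a^{k}$ to obtain $(a^{k})^{*}a^{m+1}x=(a^{k})^{*}a^{m}$ directly. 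So your closing remark that the bump to $k+m$ ``is essential'' is not accurate---the paper shows (iii) can be recovered at the original index. What your argument buys is a transparent structural picture (the self-adjointness of $(a^{m})^{*}u$ propagating through $ua^{k}=0$), at the cost of the case analysis and the weaker index; the paper's computation is slicker but less conceptual.
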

\begin{proof}

If there exist $x\in R$ and $k\in \mathbb{N}^{+}$ satisfying Eqs. ${\rm (i)}$, ${\rm (ii)}$, ${\rm (iii)}$, then we have
\begin{eqnarray*}
% \nonumber to remove numbering (before each equation)
((a^{m})^{*}a^{m+1}x)^{*}&=&((a^{m})^{*}a^{m+1+k}x^{k+1})^{*}\\
&=&(a^{m+1+k}x^{k+1})^{*}a^{m}\\
&=&(a^{m+1+k}x^{k+1})^{*}a^{m+1}x\\
&=&(a^{m+1}x)^{*}a^{m+1}x
\end{eqnarray*}
Therefore $((a^{m})^{*}a^{m+1}x)^{*}=(a^{m})^{*}a^{m+1}x$.

Conversely, if there exist $x\in R$ and $k\in \mathbb{N}^{+}$ satisfying Eqs. ${\rm (i)}$, ${\rm (ii)}$, ${\rm (iv)}$, then we have
\begin{eqnarray*}
% \nonumber to remove numbering (before each equation)
(a^{k})^{*}a^{m+1}x&=&(xa^{k+1})^{*}a^{m+1}x\\
&=&(a^{m}x^{m+1}a^{k+1})^{*}a^{m+1}x\\
&=&(x^{m+1}a^{k+1})^{*}(a^{m})^{*}a^{m+1}x\\
&=&(x^{m+1}a^{k+1})^{*}((a^{m})^{*}a^{m+1}x)^{*}\\
&=&((a^{m})^{*}a^{m+1}x^{m+2}a^{k+1})^{*}\\
&=&((a^{m})^{*}xa^{k+1})^{*}\\
&=&(a^{k})^{*}a^{m}.
\end{eqnarray*}
\end{proof}

\begin{corollary}\label{m2} Let $a\in R$. Then $a$ is pseudo core invertible if and only if $a$ is 0-weak group invertible. $a$ has at most one $0$-weak group inverse. In this case, $a^{\scriptsize\textcircled{\tiny D}}=a^{\scriptsize\textcircled{\tiny W}_0}$.
\end{corollary}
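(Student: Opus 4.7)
The plan is to recognize the statement as an immediate consequence of Proposition \ref{t2} applied with $m=0$, together with the stipulation $a^0=1$. So the proof is essentially a translation of notation.

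First I would specialize the defining conditions of the $0$-weak group inverse. Setting $m=0$ in Definition \ref{r3} and using $a^0=1$, the three equations become
\[
xa^{k+1}=a^k, \qquad ax^2=x, \qquad (a^k)^*ax=(a^k)^*.
\]
At this point the third equation does not yet coincide literally with the symmetry condition $(ax)^*=ax$ in the definition of the pseudo core inverse, so the direct approach is to invoke the equivalent characterization (iv) from Proposition \ref{t2}, which for $m=0$ and $a^0=1$ reduces exactly to
\[
(ax)^*=ax.
\]
Thus, by Proposition \ref{t2}, an element $x\in R$ is a $0$-weak group inverse of $a$ (with associated integer $k$) if and only if it satisfies $xa^{k+1}=a^k$, $ax^2=x$, and $(ax)^*=ax$. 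But this is precisely the defining system of a pseudo core inverse of $a$.

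Therefore $a$ is $0$-weak group invertible if and only if $a$ is pseudo core invertible, and every $0$-weak group inverse of $a$ is a pseudo core inverse of $a$, and conversely. Since the pseudo core inverse is unique whenever it exists (a fact recorded in the definition of $a^{\scriptsize\textcircled{\tiny D}}$), it follows that $a$ has at most one $0$-weak group inverse, and the two inverses coincide: $a^{\scriptsize\textcircled{\tiny D}}=a^{\scriptsize\textcircled{\tiny W}_0}$.

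There is no real obstacle here; the only thing to be careful about is the convention $a^0=1$ and the fact that the equivalence between the defining condition (iii) and the Hermitian condition (iv) has already been established in Proposition \ref{t2}, so no new computation is required.
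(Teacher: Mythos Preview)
Your proposal is correct and matches the paper's approach exactly: the paper states this corollary without proof immediately after Proposition~\ref{t2}, and your argument is precisely the intended specialization $m=0$ (using the stipulation $a^0=1$) together with the uniqueness of the pseudo core inverse recorded in its definition.
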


\begin{corollary}Let $a\in R$. Then $a$ is weak group invertible if and only if $a$ is $1$-weak group invertible.
\end{corollary}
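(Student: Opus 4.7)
The plan is essentially to do nothing beyond unpacking definitions. Setting $m=1$ in Definition \ref{r3}, condition (iii) becomes
\[
(a^{k})^{*}a^{1+1}x = (a^{k})^{*}a^{1},
\]
that is, $(a^{k})^{*}a^{2}x = (a^{k})^{*}a$, while conditions (i) and (ii), namely $xa^{k+1}=a^{k}$ and $ax^{2}=x$, do not depend on $m$ at all. These three equations are precisely the defining equations of the weak group inverse as given immediately after Definition \ref{b1} in the excerpt. Thus for every $x\in R$ and $k\in\mathbb{N}^{+}$, the triple of conditions for $x$ to be a weak group inverse of $a$ with index $k$ is literally identical to the triple of conditions for $x$ to be a $1$-weak group inverse of $a$ with index $k$.

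From this it follows at once that the existence of one is equivalent to the existence of the other, which is exactly the statement of the corollary. No appeal to Proposition \ref{t2} or any auxiliary lemma is needed, because unlike the case $m=0$ handled in Corollary \ref{m2} (where the pseudo core axiom $(ax)^{*}=ax$ must first be translated to $(a^{k})^{*}ax=(a^{k})^{*}$ via the alternative form (iv)), here the $m=1$ axiom already matches the weak group axiom on the nose. There is therefore no real obstacle; the only thing to be careful of is to state explicitly which equation becomes which, so that the reader sees the coincidence of conditions rather than having to recompute it. The proof can be written in one or two lines.
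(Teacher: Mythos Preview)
Your proposal is correct and matches the paper's approach: the paper states this corollary without proof, since setting $m=1$ in Definition~\ref{r3} literally reproduces the defining equations of the weak group inverse, exactly as you explain. One minor point: the weak group inverse is defined in the Preliminaries section (the fourth definition there), not ``immediately after Definition~\ref{b1}'' (which is the group-EP decomposition); you may want to correct that cross-reference.
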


According to Proposition \ref{t2}, we present another equivalent definition of the weak group inverse.

\begin{corollary}Let $a\in R$. Then $a$ is weak group invertible if and only if there exist $x\in R$ and $k\in \mathbb{N}^{+}$ such that
\begin{equation*}
 {\rm (i)}~ xa^{k+1}=a^{k},~~~{\rm (ii)} ~ax^{2}=x,~~~{\rm (iv)}~ (a^{*}a^{2}x)^{*}=a^{*}a^{2}x,
\end{equation*}.
\end{corollary}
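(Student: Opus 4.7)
The plan is to derive the statement directly from Proposition \ref{t2} specialized at $m=1$, combined with the preceding corollary which identifies weak group invertibility with $1$-weak group invertibility. First I would substitute $m=1$ in conditions (i)--(iv) of Proposition \ref{t2}: condition (i) becomes $xa^{k+1}=a^{k}$, condition (ii) becomes $ax^{2}=x$, and condition (iv) becomes $((a)^{*}a^{2}x)^{*}=(a)^{*}a^{2}x$, which is exactly $(a^{*}a^{2}x)^{*}=a^{*}a^{2}x$ as stated.

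Then I would invoke the immediately preceding corollary, which asserts that $a$ is weak group invertible if and only if $a$ is $1$-weak group invertible. Chaining these two equivalences yields the desired characterization: the existence of $x$ and $k$ satisfying (i), (ii), and the star-symmetric equation $(a^{*}a^{2}x)^{*}=a^{*}a^{2}x$ is equivalent to weak group invertibility of $a$.

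There is no genuine obstacle, since both ingredients are already established in the excerpt; the statement is essentially a renaming of the $m=1$ case of Proposition \ref{t2}. The only point requiring minor care is the convention $a^{0}=1$ set at the start of Section 4, which ensures that the expression $(a^{m})^{*}a^{m+1}x$ in Proposition \ref{t2} reads correctly as $a^{*}a^{2}x$ at $m=1$ without any boundary issue.
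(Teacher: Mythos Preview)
Your proposal is correct and matches the paper's intended approach: the corollary is simply the $m=1$ instance of Proposition~\ref{t2}, combined with the preceding corollary identifying weak group invertibility with $1$-weak group invertibility. One small remark: the convention $a^{0}=1$ is not actually needed at $m=1$, since $(a^{m})^{*}a^{m+1}x$ already reduces to $a^{*}a^{2}x$ without any zero exponent appearing; that convention is relevant only for the $m=0$ (pseudo core) case.
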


\begin{proposition}\label{m4} Let $a\in R$ and $m\in \mathbb N^{+}$. Then $a$ is $m$-weak group invertible if and only if $a^{m}$ is weak group invertible.
\end{proposition}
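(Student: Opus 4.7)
The plan is to prove both directions by explicitly constructing the needed inverse from the given one and verifying the three clauses of Definition \ref{r3}.

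For the forward direction, assume $x$ is an $m$-weak group inverse of $a$ with index $k$, and set $y := x^m$. I would claim that $y$ is a weak group inverse of $a^m$ with the same index $k$. Two iterated identities are the workhorses: $x^j a^s = a^{s-j}$ whenever $s \geq k+j$ (from iterating $xa^{k+1}=a^k$) and $a^j x^s = x^{s-j}$ whenever $s \geq j+1$ (from iterating $ax^2 = x$). Conditions (i) and (ii) for $y$ then reduce, respectively, to $x^m a^{mk+m} = a^{mk}$ and $a^m x^{2m} = x^m$. For (iii), I would combine Lemma \ref{a2}(1), which yields $a^m x^m = ax$ and hence $a^{2m} x^m = a^{m+1}x$, with the factorization $(a^{mk})^* = (a^{(m-1)k})^* (a^k)^*$, to reduce to the hypothesis $(a^k)^* a^{m+1} x = (a^k)^* a^m$.

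For the backward direction, assume $z$ is a weak group inverse of $a^m$ with weak group index $\ell$. Since $a^m$ is Drazin invertible by Lemma \ref{a2}(3), so is $a$; hence $a^D$ exists and $aa^D = a^m (a^D)^m = (a^D)^m a^m$ commutes with every power of $a$. I take $x := a^{m-1} z$ and $k := m(\ell+1)-1$. Corollary \ref{g2} applied to $a^m$ and $z$ supplies three identities I will use: $(a^m z)^2 = a^m z$, $aa^D z = z$, and $a^m z \cdot (a^D)^m = (a^D)^m$, whose right-multiplication by $a^m$ gives $a^m z \cdot aa^D = aa^D$. Condition (i) is immediate: $xa^{k+1} = a^{m-1}\, z\, a^{m(\ell+1)} = a^{m-1} a^{m\ell} = a^k$. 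For (ii), $ax^2 = a^m z \cdot a^{m-1} z = a^m z \cdot a^{m-1} aa^D z = a^m z \cdot aa^D \cdot a^{m-1} z = aa^D \cdot a^{m-1} z = a^{m-1} z = x$, where I insert $aa^D$ via $z = aa^D z$, commute it past $a^{m-1}$, and absorb the $a^m z$. For (iii), write $(a^k)^* = (a^{m-1})^* (a^{m\ell})^*$ and invoke the hypothesis $(a^{m\ell})^* a^{2m} z = (a^{m\ell})^* a^m$ to obtain $(a^k)^* a^{m+1} x = (a^k)^* a^{2m} z = (a^k)^* a^m$.

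The main obstacle is verification of (ii) in the backward direction: the candidate $x = a^{m-1} z$ turns $ax^2$ into $a^m z \cdot a^{m-1} z$, which has no obvious reduction from the defining identities of $z$ alone. The decisive trick is to exploit that $a^m z$ is an idempotent with the same range as $aa^D$, captured by the twin identities $aa^D z = z$ and $a^m z \cdot aa^D = aa^D$; inserting $aa^D$ then permits the absorption. A smaller but necessary point is choosing $k = m(\ell+1)-1$ rather than $m\ell$, so that $za^{k+1}$ matches the defining relation of the weak group inverse of $a^m$ exactly.
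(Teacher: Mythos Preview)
Your proof is correct and follows the same strategy as the paper: both directions use the same candidates ($y=x^{m}$ forward, $x=a^{m-1}z$ backward) and verify the three defining equations directly. In fact, your verification of $ax^{2}=x$ in the converse is more careful than the paper's printed argument: the paper writes $ax^{2}=a^{2m-1}y^{2}$, which tacitly assumes $ya^{m-1}=a^{m-1}y$, whereas your insertion of $aa^{D}$ via Corollary~\ref{g2} (using $aa^{D}z=z$ and $a^{m}z\cdot aa^{D}=aa^{D}$) supplies a genuine justification for this step.
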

\begin{proof}Suppose there exist $x\in R$ and $k\in \mathbb{N}^{+}$ satisfying Eqs. ${\rm (i)}$, ${\rm (ii)}$, ${\rm (iii)}$. Taking $y=x^{m}$, we have
\begin{eqnarray*}
% \nonumber to remove numbering (before each equation)
y(a^{m})^{k+1}&=&x^{m}(a^{m})^{k+1}=(a^{m})^{k},\\
a^{m}y^{2}&=&a^{m}x^{2m}=x^{m}=y
\end{eqnarray*}
and\\
\begin{eqnarray*}
% \nonumber to remove numbering (before each equation)
((a^{m})^{k})^{*}(a^{m})^{2}y&=&((a^{m})^{k})^{*}a^{2m}x^{m}\\
&=&((a^{m})^{k})^{*}a^{m+1}x\\
&=&((a^{m})^{k})^{*}a^{m}.
\end{eqnarray*}
So, $a^{m}$ is weak group invertible.

Conversely, suppose $y$ is one of weak group inverses of $a^{m}$ with ${\rm ind}(a)=k$. Taking $x=a^{m-1}y$, we have
\begin{eqnarray*}
% \nonumber to remove numbering (before each equation)
xa^{m(k+1)+1}&=&a^{m-1}ya^{m(k+1)+1}\\
&=&a^{m-1}a^{mk+1}=a^{m(k+1)},\\
ax^{2}&=&a^{2m-1}y^{2}=a^{m-1}y=x
\end{eqnarray*}
and\\
\begin{eqnarray*}
% \nonumber to remove numbering (before each equation)
(a^{m(k+1)})^{*}a^{m+1}x&=&(a^{m(k+1)})^{*}a^{2m}y\\
&=&((a^{m(k+1)})^{*}a^{m}.
\end{eqnarray*}
So, $a$ is $m$-weak group invertible by Definition \ref{r3}.
\end{proof}
The following example and proposition show the relation between the $m$-weak group inverse and the $(m+1)$-weak group inverse.
\begin{example} Let $R=M_{4}(\mathbb{Z})$, where $\mathbb{Z}$ denotes the set of all integers. Taking the involution is transpose. It is obvious that $R$ is a proper $\ast$-ring. Set $A=\left(\begin{matrix}
1&0&0&0\\
1&0&1&0\\
0&0&0&2\\
0&0&0&0
\end{matrix}
\right)$. By computation, we have $A^{2}=\left(\begin{matrix}
1&0&0&0\\
1&0&0&2\\
0&0&0&0\\
0&0&0&0
\end{matrix}
\right)$ and $A^{D}=(A^{2})^{D}=\left(\begin{matrix}
1&0&0&0\\
1&0&0&0\\
0&0&0&0\\
0&0&0&0
\end{matrix}
\right)$ with ${\rm i}(A)=3$ and ${\rm i}(A^{2})=2$. Since $(A^{D})^{\mathrm{T}}A\not\in (A^{D})^{\mathrm{T}}A^{D}R$ and $((A^{2})^{D})^{\mathrm{T}}A^{2}\in ((A^{2})^{D})^{\mathrm{T}}(A^{2})^{D}R$, we can get that $A^{2}$ is weak group invertible but $A$ is not weak group invertible by Proposition $\ref{g9}$. According to Proposition $\ref{m4}$, we have $A$ is $2$-weak group invertible but is not $1$-weak group invertible.
\end{example}

\begin{proposition}Let $a\in R$ and $m\in \mathbb N$. If $a$ is $m$-weak group invertible and $x\in R$ is one of $m$-weak group inverses of $a$, then $a$ is $(m+1)$-weak group invertible and $x^{2}a$ is one of $(m+1)$-weak group inverses of $a$.
\end{proposition}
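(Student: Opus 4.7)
The plan is to show that $y := x^{2}a$ satisfies the three defining identities of Definition \ref{r3} for $m+1$ with the same integer $k$, where $k$ is the $m$-weak group index of $a$ for $x$. I will work directly from the three hypothesized relations $xa^{k+1}=a^{k}$, $ax^{2}=x$, and $(a^{k})^{*}a^{m+1}x=(a^{k})^{*}a^{m}$, without needing anything beyond Lemma \ref{a2}.

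For identity (i), compute $y a^{k+1} = x^{2}a \cdot a^{k+1} = x(xa^{k+2})$ and apply $xa^{k+1}=a^{k}$ twice: first $xa^{k+2}=(xa^{k+1})a=a^{k+1}$, then $x \cdot a^{k+1}=a^{k}$. For identity (ii), compute $a y^{2} = ax^{2}ax^{2}a$ and use $ax^{2}=x$ twice to collapse the left and middle blocks, yielding $x \cdot x \cdot a = x^{2}a = y$.

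The real content is identity (iii), $(a^{k})^{*}a^{m+2}y=(a^{k})^{*}a^{m+1}$. I would first use $ax^{2}=x$ to absorb one factor of $a$ on the left of $x^{2}$, writing $a^{m+2}x^{2}a = a^{m+1}(ax^{2})a = a^{m+1}xa$. This converts the target to $(a^{k})^{*}a^{m+1}x \cdot a$, which is exactly the setting of the $m$-hypothesis: $(a^{k})^{*}a^{m+1}x=(a^{k})^{*}a^{m}$. Multiplying on the right by $a$ then gives $(a^{k})^{*}a^{m+1}$, as required.

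I do not anticipate any genuine obstacle: each step is a one-move rewrite using exactly one of the three defining identities. The only point that matters conceptually is that the extra factor $a$ in $a^{m+2}$ (compared to the $m$-case) combines with the trailing $a$ sitting in $y=x^{2}a$ to let $ax^{2}=x$ reduce the expression to a form where the $m$-hypothesis applies; this is precisely why multiplying $x$ on the right by $xa$ (rather than, say, $ax$) is the correct construction.
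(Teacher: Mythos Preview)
Your proof is correct and essentially identical to the paper's: both set $y=x^{2}a$ and verify the three defining identities via the same chain of rewrites, namely $ya^{k+1}=x^{2}a^{k+2}=a^{k}$, $ay^{2}=ax^{2}ax^{2}a=x^{2}a=y$, and $(a^{k})^{*}a^{m+2}y=(a^{k})^{*}a^{m+1}xa=(a^{k})^{*}a^{m+1}$.
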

\begin{proof}Suppose $x$ is one of $m$-weak group inverses of $a$ with ${\rm ind_m}(a)=k$. Taking $y=x^{2}a$, we have
\begin{eqnarray*}
% \nonumber to remove numbering (before each equation)
ya^{k+1}&=&x^{2}a^{k+2}=a^{k},\\
ay^{2}&=&ax^{2}ax^{2}a=x^{2}a=y
\end{eqnarray*}
and\
\begin{eqnarray*}
% \nonumber to remove numbering (before each equation)
(a^{k})^{*}a^{m+2}y&=&(a^{k})^{*}a^{m+2}x^{2}a\\
&=&(a^{k})^{*}a^{m+1}xa\\
&=&(a^{k})^{*}a^{m+1}.
\end{eqnarray*}
So, $a$ is $(m+1)$-weak group invertible and $x^{2}a$ is one of $(m+1)$-weak group inverses of $a$.
\end{proof}

\begin{corollary}\label{t3}Let $a\in R$ and $m,s\in \mathbb N$. If $a$ is $m$-weak group invertible and $x\in R$ is one of $m$-weak group inverses of $a$, then $a$ is $(m+s)$-weak group invertible and $x^{s+1}a^{s}$ is one of $(m+s)$-weak group inverses of $a$.
\end{corollary}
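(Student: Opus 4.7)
The plan is induction on $s$, using the preceding proposition as the engine. The base case $s=0$ is trivial, since $x^{1}a^{0}=x$ is an $m$-weak group inverse of $a$ by hypothesis. For the inductive step, assume $x^{s+1}a^{s}$ is an $(m+s)$-weak group inverse of $a$; applying the preceding proposition at level $m+s$ with this inverse yields that $(x^{s+1}a^{s})^{2}a$ is an $(m+s+1)$-weak group inverse of $a$. All that remains is to recognize this product as $x^{s+2}a^{s+1}$.

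The algebraic simplification rests on the identity $a^{s}x^{s+1}=x$ for every $s\in\mathbb{N}$. The case $s=0$ is immediate; for $s\geq 1$, the defining relation $ax^{2}=x$ gives $ax^{j+1}=ax^{2}\cdot x^{j-1}=x^{j}$ for every $j\geq 1$, and iterating pulls one $a$ through at a time:
\begin{equation*}
a^{s}x^{s+1}=a^{s-1}(ax^{s+1})=a^{s-1}x^{s}=\cdots=ax^{2}=x.
\end{equation*}
Substituting,
\begin{equation*}
(x^{s+1}a^{s})^{2}a=x^{s+1}(a^{s}x^{s+1})a^{s+1}=x^{s+1}\cdot x\cdot a^{s+1}=x^{s+2}a^{s+1},
\end{equation*}
which closes the induction.

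I anticipate no serious obstacle: everything is absorbed either into the preceding proposition or into the identity $a^{s}x^{s+1}=x$, which is a direct consequence of $ax^{2}=x$. A self-contained alternative would be to set $y=x^{s+1}a^{s}$ directly and check the three clauses of Definition \ref{r3} with $k={\rm ind}_{m}(a)$; clauses (i) and (ii) fall out by the same identity, and clause (iii) reduces via $a^{s+1}x^{s+1}=ax$ from Lemma \ref{a2}(1) to
\begin{equation*}
(a^{k})^{*}a^{m+s+1}x^{s+1}a^{s}=(a^{k})^{*}a^{m+1}x\cdot a^{s}=(a^{k})^{*}a^{m+s},
\end{equation*}
using the hypothesis $(a^{k})^{*}a^{m+1}x=(a^{k})^{*}a^{m}$ on $x$.
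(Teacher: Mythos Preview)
Your proof is correct and matches the paper's intent: the corollary is stated without proof immediately after the preceding proposition, so the implied argument is precisely induction on $s$ via that proposition, together with the collapse $(x^{s+1}a^{s})^{2}a=x^{s+2}a^{s+1}$ that you verify from $a^{s}x^{s+1}=x$. Your alternative direct check of the three defining equations is also fine and equally short.
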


The following proposition gives the equivalent condition which guarantees that an element in $R$ has a unique $m$-weak group inverse.

\begin{proposition}Let $a\in R$ and integers $m, n$ satisfy $0<m<n$. If $a$ is $m$-weak group invertible, then the following conditions are equivalent:
\begin{itemize}
\item [{\rm (1)}] $a$ has a unique $m$-weak group inverse;
\item [{\rm (2)}] For any element $x\in aa^{D}R(1-aa^{D})$, if $(aa^{D})^{*}x=0$, then $x=0$;
\item[{\rm (3)}] $a$ has a unique $n$-weak group inverse.
\end{itemize}
\end{proposition}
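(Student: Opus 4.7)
My plan is to mirror the proof of Proposition~\ref{g7}, replacing the exponent $2$ by $m+1$ throughout, and to exploit the fact that condition (2) is phrased only in terms of the Drazin skeleton $aa^D$ and so makes no reference to either $m$ or $n$. Since $a$ is $m$-weak group invertible and $n>m$, Corollary~\ref{t3} first ensures that $a$ is also $n$-weak group invertible, making (3) meaningful. I will then establish (1) $\Leftrightarrow$ (2); the same argument with $n$ in place of $m$ gives (3) $\Leftrightarrow$ (2), hence (1) $\Leftrightarrow$ (3).

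For (1) $\Rightarrow$ (2), given $x\in aa^D R(1-aa^D)$ with $(aa^D)^*x=0$ and the unique $m$-weak group inverse $y$, I would show that $z:=y+(a^D)^{m+1}x$ is also an $m$-weak group inverse. The identity $za^{k+1}=a^k$ reduces to $xa^{k+1}=0$, which follows from $xaa^D=0$ together with $aa^D a^{k+1}=a^{k+1}$. The identity $az^2=z$ uses $xa^D=0$ (killing every term involving $x(a^D)^{m+1}$) and $y=aa^D y$ from Corollary~\ref{g2} (killing $xy$), so that $az^2$ collapses to $y+ay(a^D)^{m+1}x$; the surviving cross-term then simplifies via $aya^D=a^D$ to $(a^D)^{m+1}x$. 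For the third defining equation I would use $a^{m+1}(a^D)^{m+1}=aa^D$ (which holds because $a$ and $a^D$ commute and $aa^D$ is idempotent) to reduce $(a^k)^*a^{m+1}(a^D)^{m+1}x$ to $(a^k)^*x$, then invoke $(a^k)^*=(a^k)^*(aa^D)^*$ (a consequence of $aa^D a^k=a^k$) to obtain $0$. Uniqueness then forces $(a^D)^{m+1}x=0$, and left-multiplication by $a^{m+1}$ gives $x=aa^D x=0$.

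For (2) $\Rightarrow$ (1), I would take two $m$-weak group inverses $y,z$ and set $x:=a^{m+1}(y-z)$. Commutation of $a$ and $aa^D$ together with $aa^D y=y=aa^D z$ yields $aa^D x=x$; the identity $yaa^D=a^D=zaa^D$ (derived from $aya^D=a^D$ by right-multiplying by $a$ and then left-multiplying by $a^D$, using Corollary~\ref{g2}) yields $xaa^D=0$; hence $x\in aa^D R(1-aa^D)$. Defining equation (iii) applied to $y$ and $z$ gives $(a^k)^*x=0$, and then $(aa^D)^*x=((a^D)^k)^*(a^k)^*x=0$ using $aa^D=a^k(a^D)^k$. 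Hypothesis (2) forces $x=0$, so $a^{m+1}y=a^{m+1}z$; left-multiplying by $(a^D)^{m+1}$ and using $(a^D)^{m+1}a^{m+1}=aa^D$ yields $y=z$.

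The main obstacle I anticipate is careful bookkeeping of the exponent $m+1$: the pivotal identities $a^{m+1}(a^D)^{m+1}=aa^D=(a^D)^{m+1}a^{m+1}$ and the vanishing of the cross-terms in the expansion of $az^2$ all rest on $aa^D=a^D a$ together with the consequences $xa^D=0$ and $y=aa^D y$ of membership in $aa^D R(1-aa^D)$ and of Corollary~\ref{g2}. Once these identities are in hand, the rest is a direct transcription of the $m=1$ argument from Proposition~\ref{g7}.
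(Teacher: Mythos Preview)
Your proposal is correct and is precisely the approach the paper intends: the paper's own proof consists of the single line ``It is similar to the proof in Proposition~\ref{g7},'' and you have carried out that similar proof in detail, replacing the exponent $2$ by $m+1$ and noting that condition~(2) is independent of the integer $m$, so that the equivalence $(1)\Leftrightarrow(2)$ with $n$ in place of $m$ immediately yields $(3)\Leftrightarrow(2)$. Your bookkeeping of the exponents and the auxiliary identities ($a^{m+1}(a^D)^{m+1}=aa^D$, $yaa^D=a^D$, etc.) is sound.
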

\begin{proof}It is similar to the proof in Proposition \ref{g7}.
\end{proof}

\begin{corollary} Let $a\in R$ and $m\in \mathbb {N^{+}}$. If $a$ is pseudo core invertible, then $a$ has a unique $m$-weak group inverse.
\end{corollary}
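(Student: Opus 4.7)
The plan is to invoke the equivalence (1)$\Leftrightarrow$(2) of the immediately preceding proposition and to verify its condition (2) using the Hermitian projection attached to the pseudo core inverse. Since $a$ is pseudo core invertible, it is Drazin invertible by Lemma \ref{a2} and, by Corollary \ref{m2}, $a^{\scriptsize\textcircled{\tiny D}}$ is the unique $0$-weak group inverse of $a$. Applying Corollary \ref{t3} to $a^{\scriptsize\textcircled{\tiny D}}$ produces the $m$-weak group inverse $(a^{\scriptsize\textcircled{\tiny D}})^{m+1}a^{m}$ for every $m\in\mathbb{N}^{+}$, so existence is settled; the preceding proposition then reduces the claim to showing condition (2): every $y\in aa^{D}R(1-aa^{D})$ satisfying $(aa^{D})^{*}y=0$ must vanish.

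Set $e=aa^{D}$ and $f=aa^{\scriptsize\textcircled{\tiny D}}$. Two ``cross'' identities between these idempotents do all the work. First, $ef=f$: using $a^{D}a=aa^{D}$ and Corollary \ref{g2}(1),
\[
aa^{D}\cdot aa^{\scriptsize\textcircled{\tiny D}}=a(a^{D}a)a^{\scriptsize\textcircled{\tiny D}}=a(aa^{D}a^{\scriptsize\textcircled{\tiny D}})=a\cdot a^{\scriptsize\textcircled{\tiny D}}=aa^{\scriptsize\textcircled{\tiny D}}.
\]
Second, $fe=e$: using $a^{D}a=aa^{D}$ and Corollary \ref{g2}(3),
\[
aa^{\scriptsize\textcircled{\tiny D}}\cdot aa^{D}=aa^{\scriptsize\textcircled{\tiny D}}\cdot a^{D}a=(aa^{\scriptsize\textcircled{\tiny D}}a^{D})a=a^{D}a=aa^{D}.
\]
Because $f^{*}=f$ by the definition of the pseudo core inverse, applying $*$ to $ef=f$ yields $f^{*}e^{*}=f^{*}$, and using $f^{*}=f$ this becomes the key identity $fe^{*}=f$.

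Now take any $y\in aa^{D}R(1-aa^{D})$ with $e^{*}y=0$; by definition $ey=y$. The identity $fe=e$ gives $fy=f(ey)=(fe)y=ey=y$, while multiplying the hypothesis $e^{*}y=0$ on the left by $f$ and using $fe^{*}=f$ gives $fy=(fe^{*})y=f(e^{*}y)=0$. Comparing the two equalities forces $y=0$, which is precisely condition (2). The preceding proposition then delivers uniqueness of the $m$-weak group inverse for every $m\in\mathbb{N}^{+}$.

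The one conceptual step is recognizing that the two cross identities $ef=f$ and $fe=e$---both direct consequences of Corollary \ref{g2}---combine with the Hermitian property of $f$ to force $fe^{*}=f$, which is exactly what is needed to propagate the hypothesis $e^{*}y=0$ back to $y$ through the equality $fy=y$. Once this is noticed, the verification is a two-line computation, and no further structural assumption on $R$ is required.
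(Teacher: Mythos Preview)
Your proof is correct and follows the route the paper intends: the corollary is placed immediately after the proposition with the equivalent conditions (1)--(3), and you verify condition (2) directly using the Hermitian idempotent $f=aa^{\scriptsize\textcircled{\tiny D}}$, which is the natural way to exploit pseudo core invertibility. The identities $ef=f$ and $fe=e$ (via Corollary~\ref{g2}) together with $f^{*}=f$ give exactly the cancellation needed, and your invocation of Corollaries~\ref{m2} and~\ref{t3} to secure existence of an $m$-weak group inverse is clean and accurate.
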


\begin{proposition}Let $m\in \mathbb N^{+}$. Then each element in $R$ has at most one $m$-weak group inverse if and only if $R$ is a weak proper $\ast$-ring.
\end{proposition}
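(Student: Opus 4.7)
The plan is to reduce both directions to the preceding proposition, which characterizes uniqueness of the $m$-weak group inverse of $a$ by the single condition that $(aa^{D})^{*}x=0$ and $x\in aa^{D}R(1-aa^{D})$ imply $x=0$, together with Theorem \ref{g8}, which already packages the notion of weak proper $\ast$-ring into several equivalent forms. The case $m=1$ is immediate, since a $1$-weak group inverse is literally a weak group inverse, so ``every element of $R$ has at most one $1$-weak group inverse'' is the definition of weak proper $\ast$-ring; I therefore assume $m\geq 2$ below.

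For the forward direction, suppose every element of $R$ has at most one $m$-weak group inverse. By Theorem \ref{g8} it is enough to show that each idempotent $e\in R$ has a unique weak group inverse. Observe that $e$ is trivially $k$-weak group invertible for every $k\in\mathbb{N}^{+}$ (take $x=e$ and index $1$ in Definition \ref{r3}); in particular $e$ is both $1$-weak group invertible and $m$-weak group invertible. The hypothesis gives uniqueness of the $m$-weak group inverse of $e$, and the preceding proposition applied with $0<1<m$ converts this into uniqueness of the weak group inverse of $e$. Theorem \ref{g8} then yields that $R$ is a weak proper $\ast$-ring.

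For the converse, suppose $R$ is a weak proper $\ast$-ring and let $a\in R$ be $m$-weak group invertible. By the preceding proposition it suffices to verify that whenever $x\in aa^{D}R(1-aa^{D})$ satisfies $(aa^{D})^{*}x=0$, one has $x=0$. Writing $e=aa^{D}$ and $x=ey(1-e)$ for some $y\in R$, we compute $(aa^{D})^{*}x=e^{*}ey(1-e)$, so the assumption $(aa^{D})^{*}x=0$ is precisely $e^{*}ey(1-e)=0$. Condition (4) of Theorem \ref{g8}, which holds because $R$ is weak proper, then forces $ey(1-e)=0$, that is, $x=0$.

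The main obstacle is essentially bookkeeping: one must treat $m=1$ separately (to avoid the degenerate case $m=n$ of the preceding proposition), and remember that the assertion ``at most one $m$-weak group inverse'' need only be checked for elements that already possess an $m$-weak group inverse, so that the preceding proposition may legitimately be invoked.
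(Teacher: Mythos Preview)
Your proof is correct and is essentially the same as the paper's intended argument: the paper simply states ``It is similar to the proof in Theorem~\ref{g8}'', which amounts to running the preceding proposition (the $m$-analogue of Proposition~\ref{g7}) against the idempotent criterion of Theorem~\ref{g8}, exactly as you do. Your write-up is in fact more explicit than the paper's, correctly isolating the degenerate case $m=1$ and spelling out how condition~(4) of Theorem~\ref{g8} feeds into condition~(2) of the preceding proposition via $e=aa^{D}$.
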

\begin{proof}It is similar to the proof in Theorem \ref{g8}.
\end{proof}

Some equivalent characterizations are presented in the following theorem when the $m$-weak group inverse is equal to the Drazin inverse.
\begin{theorem}\label{t4}Let $a\in R$ and integers $m, n$ satisfy $0\leq m<n$. If $a$ has a unique $m$-weak group inverse, then the following conditions are equivalent:
\begin{itemize}
\item[{\rm (1)}] $a^{\scriptsize\textcircled{\tiny W}_m}a=aa^{\scriptsize\textcircled{\tiny W}_m}$;
\item[{\rm (2)}] $(a^{n})^{\scriptsize\textcircled{\tiny W}}=(a^{\scriptsize\textcircled{\tiny W}_m}) ^{n}$;
\item[{\rm (3)}] $a^{\scriptsize\textcircled{\tiny W}_m}=a^{\scriptsize\textcircled{\tiny W}_n}$;
\item[{\rm (4)}] $a^{\scriptsize\textcircled{\tiny W}_m}=a^{D}$.
\end{itemize}
\end{theorem}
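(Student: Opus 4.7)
The plan is to prove the four-way equivalence by establishing (1) $\Leftrightarrow$ (4), (3) $\Leftrightarrow$ (4), and (2) $\Leftrightarrow$ (3). Condition (4) serves as the pivot: it makes the commutativity in (1) and the equality of inverses in (3) almost automatic, and from (3) the passage to (2) is handled by Proposition \ref{m4} together with uniqueness of the weak group inverse of $a^n$.

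I would dispatch (1) $\Leftrightarrow$ (4) first. The direction (4) $\Rightarrow$ (1) is immediate because the Drazin inverse commutes with $a$. For (1) $\Rightarrow$ (4), set $x=a^{\scriptsize\textcircled{\tiny W}_m}$; then $x$ satisfies $xa^{k+1}=a^k$, $ax^2=x$ and $ax=xa$ by hypothesis, so Lemma \ref{m1} (applied with both exponent parameters equal to $1$) forces $x=a^D$.

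Next I would prove (4) $\Leftrightarrow$ (3). For (4) $\Rightarrow$ (3), condition (iii) of the $m$-weak group definition applied to $a^D$ rewrites as $(a^k)^*a^m(1-aa^D)=0$; multiplying on the right by $a^{n-m}$ and using that $1-aa^D$ commutes with $a$ gives $(a^k)^*a^n(1-aa^D)=0$, i.e., condition (iii) for $a^D$ as an $n$-weak group inverse. Uniqueness of the $n$-weak group inverse of $a$ then follows from the preceding uniqueness proposition (with the case $m=0$ handled via Corollary \ref{m2} and the subsequent corollary on pseudo core invertibility), giving $a^{\scriptsize\textcircled{\tiny W}_n}=a^D=a^{\scriptsize\textcircled{\tiny W}_m}$. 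For (3) $\Rightarrow$ (4), write $x=a^{\scriptsize\textcircled{\tiny W}_m}=a^{\scriptsize\textcircled{\tiny W}_n}$; Corollary \ref{t3} with $s=n-m$ says $x^{n-m+1}a^{n-m}$ is an $n$-weak group inverse, so by uniqueness $x^{n-m+1}a^{n-m}=x$, while iterating $ax^2=x$ gives the complementary identity $x=a^{n-m}x^{n-m+1}$. These two together show that $a^{n-m}$ commutes with $x^{n-m+1}$, and Lemma \ref{m1} yields $x=a^D$.

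Finally I would handle (2) $\Leftrightarrow$ (3). For (3) $\Rightarrow$ (2), Proposition \ref{m4} says $(a^{\scriptsize\textcircled{\tiny W}_n})^n$ is a weak group inverse of $a^n$; since $a^n(a^n)^D=aa^D$, the uniqueness criterion for $a^n$ coincides with the one for $a$, so $(a^n)^{\scriptsize\textcircled{\tiny W}}=(a^{\scriptsize\textcircled{\tiny W}_n})^n=(a^{\scriptsize\textcircled{\tiny W}_m})^n$ by (3). For (2) $\Rightarrow$ (3), set $x=a^{\scriptsize\textcircled{\tiny W}_m}$ and verify that $x$ is also an $n$-weak group inverse, after which uniqueness forces (3). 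Iterating $ax^2=x$ gives $x=a^{n-1}x^n$, whence $a^{n+1}x=a^{2n}x^n$, and the weak-group condition (iii) for $x^n$ as $(a^n)^{\scriptsize\textcircled{\tiny W}}$ gives $(a^{k'n})^*a^{2n}x^n=(a^{k'n})^*a^n$ with $k'=\mathrm{ind}(a^n)$. The main obstacle, and the step that requires the most care, is that condition (iii) for $x$ as an $n$-weak group inverse is stated with exponent $k=\mathrm{ind}(a)$ rather than $k'n$; I would bridge this by iterating $a^j=a^{j+1}a^D$ for $j\ge k$ to write $a^k=a^{k'n}(a^D)^{k'n-k}$ and conjugating into the $\ast$-expression $(a^k)^*=((a^D)^{k'n-k})^*(a^{k'n})^*$, which sandwiches the weak-group identity and completes the verification.
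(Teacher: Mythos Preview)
Your proof is correct and uses the same toolbox as the paper (Lemma~\ref{m1}, Corollary~\ref{t3}, Proposition~\ref{m4}, and the uniqueness proposition preceding the theorem), but the organization is different. The paper argues in a single cycle $(1)\Rightarrow(2)\Rightarrow(3)\Rightarrow(4)\Rightarrow(1)$, whereas you route everything through $(4)$ as a hub. Your $(3)\Rightarrow(4)$ is identical to the paper's. Your $(1)\Rightarrow(4)$ is cleaner than the paper's detour through $(2)$ and $(3)$: a single application of Lemma~\ref{m1} with exponents $1,1$ suffices. On the other hand, your $(2)\Rightarrow(3)$, while correct, is more laborious than the paper's: you verify condition~(iii) for $x=a^{\scriptsize\textcircled{\tiny W}_m}$ as an $n$-weak group inverse by rewriting $(a^k)^\ast$ as $((a^D)^{nk'-k})^\ast(a^{nk'})^\ast$, whereas the paper simply writes
\[
a^{\scriptsize\textcircled{\tiny W}_m}=a^{n-1}(a^{\scriptsize\textcircled{\tiny W}_m})^n=a^{n-1}(a^n)^{\scriptsize\textcircled{\tiny W}}=a^{n-1}(a^{\scriptsize\textcircled{\tiny W}_n})^n=a^{\scriptsize\textcircled{\tiny W}_n},
\]
using $(a^n)^{\scriptsize\textcircled{\tiny W}}=(a^{\scriptsize\textcircled{\tiny W}_n})^n$ from Proposition~\ref{m4} and the iterated identity $x=a^{n-1}x^n$. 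Your explicit handling of the $m=0$ case via Corollary~\ref{m2} and the pseudo core corollary is a nice touch that the paper leaves implicit, since the uniqueness proposition is stated only for $0<m<n$.
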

\begin{proof}
${\rm (1)}\Rightarrow{\rm (2)}$: Since $a^{\scriptsize\textcircled{\tiny W}_m}a=aa^{\scriptsize\textcircled{\tiny W}_m}$, by Corollary \ref{t3} we have
\begin{eqnarray*}
% \nonumber to remove numbering (before each equation)
a^{\scriptsize\textcircled{\tiny W}_n}=(a^{\scriptsize\textcircled{\tiny W}_m})^{n-m+1}a^{n-m}=a^{n-m}(a^{\scriptsize\textcircled{\tiny W}_m})^{n-m+1}=a^{\scriptsize\textcircled{\tiny W}_m}.
\end{eqnarray*}
Then by Proposition \ref{m4}, we have $(a^{n})^{\scriptsize\textcircled{\tiny W}}=(a^{\scriptsize\textcircled{\tiny W}_n}) ^{n}=(a^{\scriptsize\textcircled{\tiny W}_m}) ^{n}$.\\
${\rm (2)}\Rightarrow{\rm (3)}$: By Proposition \ref{m4}, we have $(a^{n})^{\scriptsize\textcircled{\tiny W}}=(a^{\scriptsize\textcircled{\tiny W}_n}) ^{n}$. Then we have $$a^{\scriptsize\textcircled{\tiny W}_m}=a^{n-1}(a^{\scriptsize\textcircled{\tiny W}_m}) ^{n}=a^{n-1}(a^{n})^{\scriptsize\textcircled{\tiny W}}=a^{n-1}(a^{\scriptsize\textcircled{\tiny W}_n}) ^{n}=a^{\scriptsize\textcircled{\tiny W}_n}.$$
${\rm (3)}\Rightarrow{\rm (4)}$: By Corollary \ref{t3}, we have $a^{\scriptsize\textcircled{\tiny W}_n}=(a^{\scriptsize\textcircled{\tiny W}_m})^{n-m+1}a^{n-m}$. Then we can get
\begin{eqnarray*}
% \nonumber to remove numbering (before each equation)
a^{n-m}(a^{\scriptsize\textcircled{\tiny W}_m})^{n-m+1}=a^{\scriptsize\textcircled{\tiny W}_m}=a^{\scriptsize\textcircled{\tiny W}_n}=(a^{\scriptsize\textcircled{\tiny W}_m})^{n-m+1}a^{n-m}.
\end{eqnarray*}
By Lemma \ref{m1}, it is easy to obtain that $a^{\scriptsize\textcircled{\tiny W}_m}=a^{D}$.\\
${\rm (4)}\Rightarrow{\rm (1)}$: Obvious.
\end{proof}

\begin{corollary}Let $a\in R$ and $m, n\in \mathbb{N^{+}}$. If $a$ is pseudo core invertible, then the following conditions are equivalent:
\begin{itemize}
\item[{\rm (1)}] $a^{\scriptsize\textcircled{\tiny D}}a=aa^{\scriptsize\textcircled{\tiny D}}$;
\item[{\rm (2)}] $(a^{m})^{\scriptsize\textcircled{\tiny W}}=(a^{\scriptsize\textcircled{\tiny D}}) ^{m}$;
\item[{\rm (3)}] $a^{\scriptsize\textcircled{\tiny D}}=a^{\scriptsize\textcircled{\tiny W}_m}$;
\item[{\rm (4)}] $a^{\scriptsize\textcircled{\tiny D}}=a^{D}$;
\item[{\rm (5)}] $(a^{\scriptsize\textcircled{\tiny D}})^{m}a^{n}=a^{n}(a^{\scriptsize\textcircled{\tiny D}})^{m}$.
\end{itemize}
\begin{proof}It is obvious by Lemma \ref{m1} and Theorem \ref{t4}.
\end{proof}
\end{corollary}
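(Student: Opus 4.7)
The plan is to derive the equivalence (1) $\Leftrightarrow$ (2) $\Leftrightarrow$ (3) $\Leftrightarrow$ (4) as a direct specialization of Theorem \ref{t4}, and then hook (5) onto condition (4) using Lemma \ref{m1}.

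First I would invoke Corollary \ref{m2}, which identifies pseudo core invertibility with $0$-weak group invertibility, gives $a^{\scriptsize\textcircled{\tiny D}} = a^{\scriptsize\textcircled{\tiny W}_0}$, and guarantees that the $0$-weak group inverse is unique. With this dictionary every $a^{\scriptsize\textcircled{\tiny D}}$ in conditions (1)--(4) becomes $a^{\scriptsize\textcircled{\tiny W}_0}$, and the four statements match verbatim the four conditions of Theorem \ref{t4} applied to the pair $(0, m)$: the theorem's base index is $0$ and its larger index is the present $m$. The admissibility requirement $0 \le m < n$ of the theorem becomes $0 < m$, which holds since $m \in \mathbb{N}^{+}$, and the uniqueness hypothesis is automatic by Corollary \ref{m2}. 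Theorem \ref{t4} therefore yields (1) $\Leftrightarrow$ (2) $\Leftrightarrow$ (3) $\Leftrightarrow$ (4).

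It remains to handle (5). For (4) $\Rightarrow$ (5), once $a^{\scriptsize\textcircled{\tiny D}} = a^{D}$ we have $(a^{\scriptsize\textcircled{\tiny D}})^{m} = (a^{D})^{m}$, and this commutes with $a^{n}$ because $a^{D}$ commutes with $a$ by definition of the Drazin inverse. For (5) $\Rightarrow$ (4) I would apply Lemma \ref{m1} with $x = a^{\scriptsize\textcircled{\tiny D}}$: the identities $x a^{k+1} = a^{k}$ and $a x^{2} = x$ come for free from the pseudo core definition, while (5), rewritten as $a^{n} x^{m} = x^{m} a^{n}$, is precisely the commutation hypothesis of the lemma (with the lemma's $(m, n)$ being our $(n, m)$). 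The lemma then delivers $a^{D} = x = a^{\scriptsize\textcircled{\tiny D}}$, i.e.\ (4).

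There is no genuine obstacle here, since the whole argument is a translation via Corollary \ref{m2} plus one invocation each of Theorem \ref{t4} and Lemma \ref{m1}; the author's one-line ``obvious'' is justified. The only point of care is to line up the index pairs correctly when citing those two results and to verify that the hypothesis $m, n \in \mathbb{N}^{+}$ keeps every exponent in the admissible range of each cited statement.
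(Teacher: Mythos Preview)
Your proposal is correct and follows exactly the paper's approach: the paper's proof is the single line ``It is obvious by Lemma \ref{m1} and Theorem \ref{t4},'' and you have simply unpacked that, specializing Theorem \ref{t4} to the index pair $(0,m)$ via the identification $a^{\scriptsize\textcircled{\tiny D}}=a^{\scriptsize\textcircled{\tiny W}_0}$ from Corollary \ref{m2}, and handling (5) through Lemma \ref{m1} with the index swap you noted.
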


\begin{corollary}Let $a\in R$, $m, n, k\in \mathbb{N^{+}}$ and $k\ge 2$. If  $a$ has a unique weak group inverse, then the following conditions are equivalent:
\begin{itemize}
\item[{\rm (1)}] $a^{\scriptsize\textcircled{\tiny W}}a=aa^{\scriptsize\textcircled{\tiny W}}$;
\item[{\rm (2)}] $(a^{k})^{\scriptsize\textcircled{\tiny W}}=(a^{\scriptsize\textcircled{\tiny W}})^{k}$;
\item[{\rm (3)}] $a^{\scriptsize\textcircled{\tiny W}}=a^{\scriptsize\textcircled{\tiny W}_k}$;
\item[{\rm (4)}] $a^{\scriptsize\textcircled{\tiny W}}=a^{D}$;
\item[{\rm (5)}] $(a^{\scriptsize\textcircled{\tiny W}})^{m}a^{n}=a^{n}(a^{\scriptsize\textcircled{\tiny W}})^{m}$.
\end{itemize}
\end{corollary}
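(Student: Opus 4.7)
The plan is to reduce the entire chain to two results already established in the paper: Theorem \ref{t4} handles the interplay between the $m$-weak group inverse, the $n$-weak group inverse, and the Drazin inverse when $0\le m<n$, while Lemma \ref{m1} upgrades the defining equations of a weak group inverse to the Drazin inverse as soon as any commutation of the form $a^n x^m = x^m a^n$ is present. Since by the earlier corollary identifying weak group invertibility with $1$-weak group invertibility we have $a^{\scriptsize\textcircled{\tiny W}} = a^{\scriptsize\textcircled{\tiny W}_1}$, and since $k\ge 2$ gives $0\le 1 < k$, Theorem \ref{t4} applies with $m=1$ and $n=k$.

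First I would apply Theorem \ref{t4} directly to obtain (1)$\Leftrightarrow$(2)$\Leftrightarrow$(3)$\Leftrightarrow$(4). Indeed, its four equivalent conditions with the above choice of indices read $a^{\scriptsize\textcircled{\tiny W}_1}a = a a^{\scriptsize\textcircled{\tiny W}_1}$, $(a^k)^{\scriptsize\textcircled{\tiny W}} = (a^{\scriptsize\textcircled{\tiny W}_1})^k$, $a^{\scriptsize\textcircled{\tiny W}_1} = a^{\scriptsize\textcircled{\tiny W}_k}$, and $a^{\scriptsize\textcircled{\tiny W}_1} = a^D$, which are exactly (1)--(4) after the substitution $a^{\scriptsize\textcircled{\tiny W}_1} = a^{\scriptsize\textcircled{\tiny W}}$. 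The uniqueness hypothesis needed to invoke Theorem \ref{t4} is the standing assumption that $a$ has a unique weak group inverse.

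Next I would close the loop by proving (4)$\Leftrightarrow$(5). The direction (4)$\Rightarrow$(5) is immediate: since the Drazin inverse commutes with every power of $a$, assuming $a^{\scriptsize\textcircled{\tiny W}} = a^D$ gives $(a^{\scriptsize\textcircled{\tiny W}})^m a^n = (a^D)^m a^n = a^n (a^D)^m = a^n (a^{\scriptsize\textcircled{\tiny W}})^m$. Conversely, assume (5) and set $x = a^{\scriptsize\textcircled{\tiny W}}$. Let $k$ be the weak group index of $a$; the defining equations give $xa^{k+1} = a^k$ and $ax^2 = x$, while (5) supplies the commutation $a^n x^m = x^m a^n$. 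All hypotheses of Lemma \ref{m1} are met, so the lemma yields $a^D = x = a^{\scriptsize\textcircled{\tiny W}}$, which is (4).

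The main obstacle is essentially only bookkeeping: one must recognise that $a^{\scriptsize\textcircled{\tiny W}} = a^{\scriptsize\textcircled{\tiny W}_1}$ so that Theorem \ref{t4} is genuinely applicable, and check that the hypothesis $k\ge 2$ is exactly what is needed to guarantee $1 < k$ in that theorem. Once these indexing points are settled the argument is no deeper than the preceding corollary for the pseudo core inverse and reduces to citing Lemma \ref{m1} and Theorem \ref{t4}.
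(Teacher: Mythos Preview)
Your argument is correct and matches the paper's intended approach: the paper gives no explicit proof for this corollary, but the immediately preceding corollary (for the pseudo core inverse) is justified by the one-line ``obvious by Lemma~\ref{m1} and Theorem~\ref{t4}'', and your proposal is precisely the specialization of that reasoning to $m=1$. The only cosmetic issue is that in your $(5)\Rightarrow(4)$ step you reuse the letter $k$ for the weak group index while $k$ is already the fixed parameter in the statement; rename it (say $\ell$) to avoid the clash.
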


\bigskip
%%%%%%%%%%%%%%%%%%%%%%%%%%%%%%%%%%%%%%%%%%%%%%%%%%%%%%%%%%%%%%%%%%%%%%%%%%%%%%%%%%%%%%%%%%%%%%%%%%%%%%%%%%%%%%%%%%%%%%%%%%%%%%%%%%%%%%%%%%%%%%%%%%%%%%%%%%%%%%%%%%%%%%%%%%%%%%%%%%%%
%%%%%%%%%%%%%%%%%%%%%%%%%%%%%%%%%%%%%%%%%%%%%%%%%%%%%%%%%%%%%%%%%%%%%%%%%%%%%%%%%%%%%%%%%%%%%%%%%%%%%%%%%%%%%%%%%%%%%%%%%%%%%%%%%%%%%%%%%%%%%%%%%%%%%%%%%%%%%%%%%%%%%%%%%%%%%%%%%%%%
%%%%%%%%%%%%%%%%%%%%%%%%%%%%%%%%%%%%%%%%%%%%%%%%%%%%%%%%%%%%%%%%%%%%%%%%%%%%%%%%%%%%%%%%%%%%%%%%%%%%%%%%%%%%%%%%%%%%%%%%%%%%%%%%%%%%%%%%%%%%%%%%%%%%%%%%%%%%%%%%%%%%%%%%%%%%%%%%%%%%
%%%%%%%%%%%%%%%%%%%%%%%%%%%%%%%%%%%%%%%%%%%%%%%%%%%%%%%%%%%%%%%%%%%%%%%%%%%%%%%%%%%%%%%%%%%%%%%%%%%%%%%%%%%%%%%%%%%%%%%%%%%%%%%%%%%%%%%%%%%%%%%%%%%%%%%%%%%%%%%%%%%%%%%%%%%%%%%%%%%

\centerline {\bf ACKNOWLEDGMENTS}
This research is supported by the National Natural Science Foundation of China (No. 11771076, 11871145), the Fundamental Research Funds for the Central Universities, the Postgraduate Research and Practice Innovation Program of Jiangsu Province (No. KYCX18$_{-}$0053).

\end{document}